\newcommand{\nwc}{\newcommand}
\nwc\eps{\varepsilon}
\newtheorem{theorem}{Theorem}[section]
\newtheorem{proposition}[theorem]{Proposition}
\newtheorem{claim}{Claim}
\newtheorem{remark}{Remark}
\theoremstyle{remark}
\numberwithin{equation}{section}
\begin{document}

\title[Stabilization of a KdV-KdV system with delay]{On the boundary stabilization of the KdV-KdV system with time-dependent delay}

\author[Capistrano-Filho]{Roberto de A.  Capistrano--Filho}
\address{
Departamento de Matem\'atica, Universidade Federal de Pernambuco\\
Cidade Universit\'aria, 50740-545, Recife (PE), Brazil\\
Email address: \normalfont\texttt{roberto.capistranofilho@ufpe.br}}


\author[Chentouf]{Boumedi\`ene Chentouf*}
\thanks{*Corresponding author.}
\address{
Faculty of Science, Kuwait University \\
Department of Mathematics, Safat 13060, Kuwait\\
Email address: \normalfont\texttt{boumediene.chentouf@ku.edu.kw}}


\author[Gonzalez Martinez]{Victor H. Gonzalez Martinez}
\address{
Departamento de Matem\'atica, Universidade Federal de Pernambuco\\
Cidade Universit\'aria, 50740-545, Recife (PE), Brazil\\
Email address: \normalfont\texttt{victor.martinez@ufpe.br}}


\author[Muñoz]{Juan Ricardo Muñoz}
\address{
Departamento de Matem\'atica, Universidade Federal de Pernambuco\\
Cidade Universit\'aria, 50740-545, Recife (PE), Brazil\\
Email address: \normalfont\texttt{juan.ricardo@ufpe.br}}

\subjclass[2010]{Primary: 35Q53, 93D15, 93C20; Secondary: 93D30.}

\keywords{KdV-KdV equation, Stabilization, Decay rate, Lyapunov approach}

\date{}

\numberwithin{equation}{section}

\begin{abstract}
The boundary stabilization problem of the Boussinesq KdV-KdV type system is investigated in this paper. An appropriate boundary feedback law consisting of a linear combination of a damping mechanism and a delay term is designed. Then, first, considering time-varying delay feedback together with a smallness restriction on the length of the spatial domain and the initial data, we show that the problem under consideration is well-posed. The proof combines Kato's approach and the fixed-point argument. Last but not least, we prove that the energy of the linearized KdV-KdV system decays exponentially by employing the Lyapunov method.
\end{abstract}

\maketitle

\section{Introduction}
\subsection{Boussinesq system model}
The Boussinesq system is a set of partial differential equations (PDEs) that describe the behavior of waves in fluids with small-amplitude and long-wavelength disturbances. It was first introduced by the French mathematician Joseph Boussinesq in the 19th century as a way to model waves in shallow water \cite{bou}. Since then, the system has been used to study a wide range of physical phenomena, including ocean currents, atmospheric circulation, and heat transfer in fluids. The Boussinesq system is also an important tool in the study of fluid dynamics and has applications in a variety of fields, including meteorology, oceanography, and engineering.

Recently, Bona \textit{et al.} in \cite{Bona2002,Bona2004} developed a four-parameter family of Boussinesq systems to describe the motion of small-amplitude long waves on the surface of an ideal fluid under gravity and in situations where the motion is sensibly two-dimensional. They specifically investigated a family of systems of the form
\begin{equation}\label{eq:Boussi}
\begin{cases}
\eta_t(t,x)+\omega_x(t,x)+a\omega_{xxx}(t,x)-b\eta_{xxt}(t,x) + (\eta(t,x)\omega(t,x))_x = 0, \\
\omega_t(t,x)+\eta_x(t,x)+c\eta_{xxx}(t,x)-d\omega_{xxt}(t,x)+\omega(t,x)\omega_x(t,x) = 0,
\end{cases}
\end{equation}
which are all Euler equation approximations of the same order. Here $\eta$ represents the elevation of the equilibrium point and $\omega = \omega_\theta$ is the horizontal velocity in the flow at height $\theta\ell$, where $\theta \in [0,1]$ and $\ell$ is the undisturbed depth of the fluid. The parameters $a, b, c, d$, that one might choose in a given modeling situation, are required to fulfill the relations
$ a+b = \frac{1}{2}\left( \theta^2-\frac{1}{3} \right)$ and $ c+d = \frac{1}{2}(1-\theta^2) \geq 0.$

When $b = d = 0$ and making a scaling argument, we obtain the Boussinesq system of KdV-KdV type
\begin{equation}\label{eq:KdV-KdVa}
\begin{cases}
\eta_t(t,x)+\omega_x(t,x)+\omega_{xxx}(t,x) + (\eta(t,x)\omega(t,x))_x = 0, \\
\omega_t(t,x)+\eta_x(t,x)+\eta_{xxx}(t,x)+\omega(t,x)\omega_x(t,x) = 0,
\end{cases}
\end{equation}
which is shown to admit global solutions on $\mathbb{R}$ and also has good control properties such as stabilization, and controllability, in periodic framework $\mathbb{T}$\footnote{See \cite{Bona2004} for the real-line case and \cite{RoAn,Micu2009} for details in the periodic framework.}. Nonetheless, stabilization properties for the Boussinesq KdV-KdV system on a bounded domain of $\mathbb{R}$ is a challenging problem due to the coupling of the nonlinear and dispersive nature of the PDEs. In this spirit, a few works indicate that appropriate boundary feedback controls provide good stabilization results to the system \eqref{eq:KdV-KdVa} on a bounded domain $\mathbb{R}$ (see, for instance, \cite{Cerpa20XX,Capistrano2018,Capistrano2019,Pazoto2008}). To be more precise, in  \cite{Pazoto2008}, a set of boundary controls is needed so that the solutions of the system \eqref{eq:KdV-KdVa} issuing from small data globally exist and the corresponding energy exponentially decay. Indeed, \eqref{eq:KdV-KdVa} is coupled with the following boundary conditions:
\[
\begin{cases}
\omega(t,0) = \omega_{xx}(t,0) =0, & t>0, \\
\omega_x(t,0) =a_0 \eta_x(t,0), & t>0, \\
\omega_x(t,L) =-a_1 \eta_x(t,L), & t>0, \\
\omega_x(t,L) = \eta_x(t,L), \;\;
\omega_{xx}(t,L) = - \eta_{xx}(t,L), & t>0,
\end{cases}
\]
where $a_0 \ge 0$, whereas $a_1>0$.
Later, two boundary controls are designed via the backstepping method to obtain a local rapid exponential stabilization result for the solutions to \eqref{eq:KdV-KdVa} \cite{Capistrano2018}. In turn, the main concern in \cite{Capistrano2019} is the exact controllability of \eqref{eq:KdV-KdVa}. Specifically, a control of Neumann type is proposed to reach a local exact controllability property as well as the exponential stability of the system. Lastly, the linear variant of \eqref{eq:KdV-KdVa} is considered and a single linear boundary control is designed to obtain the rapid stabilization of the solutions \cite{Cerpa20XX}.

\subsection{Problem setting} First, let us consider the KdV-KdV equation \eqref{eq:KdV-KdVa} but in a bounded domain $[0,L]$ and with the following set of boundary conditions
\begin{equation}\label{eq:BCondR}
\begin{cases}
\eta(t,0) = \eta(t,L) = \eta_x(t,0) =0, & t\in\mathbb{R}^{+}, \\
 \omega(t,0) = \omega(t,L)= \omega_x(t,L) =0, & t\in\mathbb{R}^{+}.
\end{cases}
\end{equation}
As mentioned before, note that considering the system described above, two important facts need to be mentioned:

\vspace{0.1cm}

$\bullet$ We first notice that the \textit{global Kato smoothing effect} does not hold for the set of boundary condition \eqref{eq:BCondR}. This makes impossible the task of showing the well-posedness findings by employing classical methods, such as semigroup theory, and hence the well-posedness problem of this system remains open.

\vspace{0.1cm}

$\bullet$ The second issue is related to the energy of the system  \eqref{eq:KdV-KdVa} and \eqref{eq:BCondR}. Under the above boundary conditions, a simple integration by parts yields
$$
\frac{d}{dt}E_0 (t)
=-\int_0^L(\eta(t,x)\omega(t,x))_{x}\eta(t,x)\,dx,
$$
where $$E_0(t) = \frac{1}{2} \int_0^L ( \eta^2 (t,x)+ \omega^2(t,x) )\,dx$$
is the total energy associated with \eqref{eq:KdV-KdVa} and \eqref{eq:BCondR}. This indicates that \textit{we do not have any control over the energy in the sense that its time derivative does not have a fixed sign}.

\vspace{0.1cm}

Therefore, due to the restriction presented in these two points, the following questions naturally arise:

\vspace{0.1cm}

\noindent\textbf{Question $\mathcal{A}$}: \textit{Is there a suitable set of boundary conditions so that the Kato smoothing effect can be revealed?}

 \vspace{0.1cm}

\noindent \textbf{Question $\mathcal{B}$}: \textit{Is there a feedback control law that permits the control of the nonlinear term presented in the derivative of the energy associated with the closed-loop system? Moreover, is this desired feedback law strong enough in the presence of a time-dependent delay?}

 \vspace{0.1cm}

\noindent \textbf{Question $\mathcal{C}$}:  \textit{If the answer to these previous questions is yes, does $E_0(t)\to0$ as $t\to\infty$? If this is the case, can we give an explicit decay rate?}

 \vspace{0.1cm}

Our motivation in this work is to give answers to these questions. In this spirit, and to deal with the Boussinesq system of KdV-KdV type~\eqref{eq:KdV-KdVa}, let us consider the set of boundary conditions:
\begin{equation}\label{eq:BCond}
\begin{cases}
\eta(t,0) = \eta(t,L) = \eta_x(t,0) = \omega(t,0) = \omega(t,L)= 0, & t>0, \\
\omega_x(t,L) = -\alpha \eta_x(t,L)+\beta \eta_x(t-\tau(t),L), & t>0,
\end{cases}
\end{equation}
where $\tau(t)$ is the time-varying delay, while $\alpha$ and $\beta$ are feedback gains.

\begin{remark} The following remarks are now in order.
\begin{itemize}
\item[i.] Note that our new set of boundary conditions contains a damping mechanism $\alpha \eta_x(t,L)$ as well as the time-varying delayed feedback $\beta \eta_x(t-\tau(t),L)$.
\item[ii.] The damping mechanism will guarantee the Kato smoothing effect, which is paramount to proving the well-posedness of the system under consideration in this article.
\item[iii.] The time-varying delay feedback, together with the damping mechanism, permits to drive the energy to $0$, as $t$ goes to $\infty$, giving the stabilization of the system \eqref{eq:KdV-KdVa} and \eqref{eq:BCond}, with a precise decay rate.
\item[iv.] We point out that our main result, given in the next subsection, ensures the exponential stability of the linearized system associated with \eqref{eq:KdV-KdVa}--\eqref{eq:BCond} employing $\tau(t)$ as a time-varying delay. However, due to the lack of a priori $L^2$-estimate, it is hard to extend the result to the nonlinear system \eqref{eq:KdV-KdVa}--\eqref{eq:BCond}. We instruct the reader to see the discussion about this point in Section \ref{sec4}.
\end{itemize}
\end{remark}

It is also noteworthy that the time-delay phenomenon is practically unavoidable because of miscellaneous reasons. Indeed, it often occurs in numerous areas such as biology, mechanics, and engineering due to the dynamics of the actuators and sensors. Having said that, there is in literature a predominant opinion that time delay has intrinsically a disadvantage on the performance of practical systems (see for instance the first papers that treated this subject in the PDEs framework \cite{d1,d2,d3}). This gives rise to a monumental endeavor in attempting to nullify any negative impact of the presence of a delay on a system. In fact, the authors in \cite{Nicaise2006,xyl} show that the solutions to the wave equation remain stable provided that the delayed term is small, otherwise the stability property is lost. This outcome is extended in \cite{np1} to a general class of second-order evolution equations with unbounded time-dependent delayed control. Similar results are also obtained for numerous systems with time-dependent delay (see for instance \cite{np2,np3,Nicaise2009} and the references therein). Note also that in the context of dispersive equations, time-delayed feedback is a challenging problem as it can lead to instability or oscillatory behavior in numerous instances. Some recent articles - not exhaustive -  already addressed the stabilization problem of dispersive systems with delay. We can cite, for example, \cite{Valein2019}, \cite{bc2021} and \cite{Boumediene2023} for KdV, KS, and Kawahara equations, where time-delay boundary controls are considered. Furthermore, if the time delay occurs in the equation, the authors in  \cite{Valein20XX}, \cite{Martinez2022,Chentouf22}, and \cite{Munoz2022} showed stabilization results for the KdV, fifth-order KdV, and Kawahara-Kadomtsev-Petviashvili equations, respectively. Finally, we point out that using the time-varying delay, the authors in  \cite{Parada2022} obtained stabilization outcomes for the KdV equation. To the author's best knowledge, this is the only work that considers a coupled dispersive system with a time-dependent delay and we believe that the techniques presented here can be adapted to other systems.

\subsection{Main results and paper's outline} To our knowledge, due to the previous restrictions, there is no result combining the damping mechanism and the boundary time-varying delay to guarantee stabilization results for the linearized KdV-KdV system associated with \eqref{eq:KdV-KdVa}--\eqref{eq:BCond}. In order to state the main result and provide answers to the questions previously mentioned, we assume that there exist two positive constants $M$ and $d<1$ such that the time-dependent function $\tau(t)$ satisfies the following standard conditions:
\begin{equation}\label{eq:TauCond}
\begin{cases}
0 < \tau(0) \leq \tau(t) \leq M, \quad
\dot\tau(t)\leq d < 1,&\forall t\geq 0,\\
\tau\in W^{2, \infty}([0,T]),&T>0.
\end{cases}
\end{equation}
Furthermore, the feedback gains $\alpha$ and $\beta$  must satisfy the following constraint
\begin{equation}\label{eq:CCond}
(2\alpha-\lvert \beta \rvert)(1-d)>\lvert \beta \rvert, \ \ \text{for $0\leq d <1$}.
\end{equation}
or equivalently,
\[
\alpha>\frac{\lvert\beta\rvert}{2}\left(\frac{2-d}{1-d}\right), \ \ \text{for $0\leq d <1$}.
\]

Next, let $X_{0}:= L^2(0,L)\times L^2(0,L),$ $ H:=X_0 \times L^2(0,1)$ and consider the space
$$\mathcal{B}:=C([0, T ], X_0) \cap L^2 (0, T, [H^1(0,L)]^2),$$
whose norm is
$$
\lVert (\eta,\omega) \rVert_{\mathcal{B}} = \sup_{t\in[0,T]} \lVert (\eta(t),\omega(t)) \rVert_{X_0}
+\lVert (\eta_x,\omega_x) \rVert_{L^2(0,T; X_0)}.$$
Whereupon,  we are interested in the behavior of the solutions of the system
 \begin{equation}\label{eq:KdV-KdV}
\begin{cases}
\eta_t(t,x)+\omega_x(t,x)+\omega_{xxx}(t,x) + (\eta(t,x)\omega(t,x))_x = 0,&\mathbb{R}^{+}\times(0,L), \\
\omega_t(t,x)+\eta_x(t,x)+\eta_{xxx}(t,x)+\omega(t,x)\omega_x(t,x) = 0, &\mathbb{R}^{+}\times(0,L), \\
\eta(t,0) = \eta(t,L) = \eta_x(t,0) = \omega(t,0) = \omega(t,L)= 0, & t\in\mathbb{R}^{+},\\
\omega_x(t,L) = -\alpha \eta_x(t,L)+\beta \eta_x(t-\tau(t),L), & t>0,\\
\eta_x(t-\tau(0),L) = z_0(t-\tau(0))\in L^2(0,1) , & 0<t<\tau(0), \\
\left(\eta(0,x),\omega(0,x)\right) = \left(\eta_0(x),\omega_0(x)\right)\in X_0, & x\in(0,L).
\end{cases}
\end{equation}

It is noteworthy that the total energy associated with the system \eqref{eq:KdV-KdV} will be defined in $H$ by
\begin{equation}\label{eq:En}
E(t) = \frac{1}{2} \int_0^L(  \eta^2(t,x) +  \omega^2(t,x) )\,dx
+ \frac{\lvert \beta\rvert }{2}\tau(t) \int_0^1 \eta_x^2(t-\tau(t)\rho , L)\,d\rho.
\end{equation}
Thereafter, the principal result of the article ensures that the energy $E(t)$ decays exponentially despite the presence of the delay. An estimate of the decay rate is also provided. This answers each question that we tabled previously.
\begin{theorem}\label{th:Lyapunov0}
Let $0<L<\sqrt{3}\pi$. Suppose that~\eqref{eq:TauCond} and \eqref{eq:CCond} are satisfied. Then, for two positive constants $\mu_1$ and $\mu_2 $ with $ \mu_1 L < 1$, there exist
\begin{equation}\label{eq:r}
\zeta = \frac{1 + \max\{\mu_1 L,\mu_2\}}{1- \max\{\mu_1 L,\mu_2\}},
\end{equation}
and
\begin{equation}\label{eq:lambda}
 \lambda \leq \min \left\lbrace
\frac{\mu_1(3\pi^2-L^2)}{L^2(1+\mu_1)}  , \frac{\mu_2(1-d)}{M(1+\mu_2)}
\right\rbrace
\end{equation}
such that the energy $E(t)$ given by \eqref{eq:En} associated to the linearized  system of \eqref{eq:KdV-KdV} around the origin satisfies
$$
E(t) \leq \zeta E(0)e^{-\lambda t}, \quad \hbox{ for all } t \geq 0.$$
 \end{theorem}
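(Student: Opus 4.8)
The plan is to run a Lyapunov argument in the spirit of the Nicaise--Pignotti scheme for delay PDEs, the new feature being the coupling of the two dispersive equations. First I would recast the delayed boundary term as an internal transport variable: setting $z(t,\rho)=\eta_x(t-\tau(t)\rho,L)$ for $\rho\in(0,1)$, one checks that $z$ obeys $\tau(t)z_t(t,\rho)+\bigl(1-\dot\tau(t)\rho\bigr)z_\rho(t,\rho)=0$ with $z(t,0)=\eta_x(t,L)$ and $z(t,1)=\eta_x(t-\tau(t),L)$, that the last summand in \eqref{eq:En} equals $\tfrac{|\beta|}{2}\tau(t)\int_0^1 z^2(t,\rho)\,d\rho$, and that the history condition in \eqref{eq:KdV-KdV} provides $z(0,\cdot)$. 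Differentiating $E$ along the linearized flow, integrating by parts in $x$ and invoking the boundary conditions of \eqref{eq:KdV-KdV} (only the cross term $\eta_x(t,L)\omega_x(t,L)$ survives), then substituting $\omega_x(t,L)=-\alpha\eta_x(t,L)+\beta z(t,1)$ and processing the delay contribution through the transport equation, I expect
$$\dot E(t)\le -\Bigl(\alpha-\tfrac{|\beta|}{2}\Bigr)\eta_x^2(t,L)+\beta\,\eta_x(t,L)z(t,1)-\tfrac{|\beta|}{2}\bigl(1-\dot\tau(t)\bigr)z^2(t,1).$$
A Young inequality with a free parameter, combined with $\dot\tau\le d$ from \eqref{eq:TauCond}, turns this into $\dot E(t)\le -\delta_1\eta_x^2(t,L)-\delta_2 z^2(t,1)$ with $\delta_1,\delta_2>0$; the existence of a parameter making both coefficients positive is precisely condition \eqref{eq:CCond}. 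In particular $E$ is nonincreasing, but a perturbed functional is needed to produce the rate.

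Next I would set $V(t)=E(t)+\mu_1 V_1(t)+\mu_2 V_2(t)$, with the multiplier-type functional $V_1(t)=\int_0^L x\,\eta(t,x)\omega(t,x)\,dx$ and a delay functional $V_2(t)=\tfrac{|\beta|}{2}\tau(t)\int_0^1 w(\rho)z^2(t,\rho)\,d\rho$ for a suitable nonincreasing weight $w$ with $0\le w\le 1$. Since $|V_1|\le \tfrac{L}{2}\int_0^L(\eta^2+\omega^2)\,dx$ and $0\le V_2\le \tfrac{|\beta|}{2}\tau(t)\int_0^1 z^2\,d\rho$, the conditions $\mu_1 L<1$ and $\mu_2<1$ give the two-sided bound $\bigl(1-\max\{\mu_1L,\mu_2\}\bigr)E(t)\le V(t)\le\bigl(1+\max\{\mu_1L,\mu_2\}\bigr)E(t)$, which is the source of $\zeta$ in \eqref{eq:r}. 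Multiplying the linearized equations by $x\eta$ and $x\omega$ and using the boundary conditions yields $\dot V_1(t)=\tfrac12\int_0^L(\eta^2+\omega^2)-\tfrac32\int_0^L(\eta_x^2+\omega_x^2)+\tfrac{L}{2}\bigl(\eta_x^2(t,L)+\omega_x^2(t,L)\bigr)$; applying the Poincar\'e inequality $\int_0^L u^2\le \tfrac{L^2}{\pi^2}\int_0^L u_x^2$ to the $H^1_0(0,L)$ functions $\eta$ and $\omega$ gives $\dot V_1(t)\le -\tfrac{3\pi^2-L^2}{2\pi^2}\int_0^L(\eta_x^2+\omega_x^2)+\tfrac{L}{2}\bigl(\eta_x^2(t,L)+\omega_x^2(t,L)\bigr)$, and this is where the hypothesis $L<\sqrt3\,\pi$ and the factor $3\pi^2-L^2$ of \eqref{eq:lambda} enter. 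Using the transport equation once more, $\dot V_2(t)$ is bounded above by a negative multiple of $\int_0^1 z^2$, with constant depending on $1-d$ and $M$, plus a multiple of $z^2(t,0)=\eta_x^2(t,L)$.

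Finally I would add up $\dot V=\dot E+\mu_1\dot V_1+\mu_2\dot V_2$. Expanding $\omega_x^2(t,L)=\bigl(-\alpha\eta_x(t,L)+\beta z(t,1)\bigr)^2\le 2\alpha^2\eta_x^2(t,L)+2\beta^2 z^2(t,1)$ in $\dot V_1$, every boundary contribution from $\mu_1\dot V_1$ and $\mu_2\dot V_2$ becomes (constant)$\cdot\eta_x^2(t,L)$ or (constant)$\cdot z^2(t,1)$, and for $\mu_1,\mu_2$ small enough (in terms of $\alpha,\beta,d,L$) these are absorbed by the dissipation $-\delta_1\eta_x^2(t,L)-\delta_2 z^2(t,1)$ coming from $\dot E$. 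What remains is $\dot V(t)\le -\mu_1\tfrac{3\pi^2-L^2}{2\pi^2}\int_0^L(\eta_x^2+\omega_x^2)-c\,\mu_2\int_0^1 z^2\,d\rho$; one more application of Poincar\'e converts $\int_0^L(\eta_x^2+\omega_x^2)$ into a multiple of $\int_0^L(\eta^2+\omega^2)$, and comparing the resulting coefficients term by term with those of $V$ and balancing the two pieces gives $\dot V(t)+\lambda V(t)\le 0$ for every $\lambda$ satisfying \eqref{eq:lambda}. Gronwall's inequality then yields $V(t)\le V(0)e^{-\lambda t}$, and the equivalence between $V$ and $E$ gives $E(t)\le \zeta E(0)e^{-\lambda t}$; all the manipulations above are first carried out for smooth solutions and extended to general data by density. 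The main obstacle is precisely the bookkeeping in this last combination step: one must simultaneously keep $\delta_1,\delta_2>0$ (which forces \eqref{eq:CCond}), keep $\mu_1 L<1$ (for the equivalence $V\sim E$), and keep $\mu_1,\mu_2$ small enough for the boundary absorption, while still extracting a strictly positive $\lambda$ — checking that \eqref{eq:CCond} together with $L<\sqrt3\,\pi$ leaves enough room for all of this at once is the heart of the argument.
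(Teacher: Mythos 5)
Your proposal follows essentially the same route as the paper: the transport reformulation of the delay, the Lyapunov functional $V=E+\mu_1\int_0^L x\eta\omega\,dx+\mu_2\,\tfrac{|\beta|}{2}\tau(t)\int_0^1(1-\rho)z^2\,d\rho$ (your general nonincreasing weight $w$ is exactly the paper's $1-\rho$), the equivalence $(1-\max\{\mu_1L,\mu_2\})E\le V\le(1+\max\{\mu_1L,\mu_2\})E$ producing $\zeta$, the Poincar\'e step yielding the factor $3\pi^2-L^2$ under $L<\sqrt3\,\pi$, and absorption of the boundary quadratic terms for small $\mu_1,\mu_2$ (the paper phrases this as negative definiteness of a perturbed matrix $\Psi_{\mu_1,\mu_2}$ built from $\Phi_{\alpha,\beta}$), followed by Gronwall. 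The intermediate identities you state ($\dot E$, $\dot V_1$, the role of \eqref{eq:CCond}) are correct, so the plan is sound and matches the paper's proof.
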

 
 This outcome brings a new contribution of the stability of the KdV-KdV system with a delay term since in \cite{Cerpa20XX,Capistrano2018,Capistrano2019,Pazoto2008} no delay was considered. Moreover, unlike these papers, the spectral analysis of the linearized system cannot be conducted due to the time dependency of the delay. In turn, this prevents us from getting the set of critical lengths. The approach used in the current work is direct as it is based on the Lyapunov method.

\vspace{0.2cm}

We end this section by providing an outline of this paper, which consists of four parts including the Introduction. Section \ref{sec2} discusses the existence of local solutions for the nonlinear Boussinesq KdV-KdV system \eqref{eq:KdV-KdV}. Section \ref{sec3} is devoted to proving the stabilization result, Theorem \ref{th:Lyapunov0}, for the linearized system associated with \eqref{eq:KdV-KdV}. Additionally, we have shown that the decay rate $\lambda$ of Theorem \ref{th:Lyapunov0} can be optimized. Finally, in Section \ref{sec4}, we will provide some concluding remarks and discuss open problems related to the stabilization of the nonlinear Boussinesq KdV-KdV system \eqref{eq:KdV-KdV}.

 \section{Well-posedness theory}\label{sec2}

\subsection{Linear problem}\label{ssec:WPlin}
Consider the following linear Cauchy problem
\begin{equation}\label{eq:Cauchy}
\begin{split}
\dfrac{d}{dt} U(t) = A(t)U(t),\\
U(0) = U_0, \quad t>0,
\end{split}
\end{equation}
where $A(t)\colon D(A(t))\subset {H}\to {H}$ is densely defined. If $D(A(t))$ is independent of time $t$, i.e., $D(A(t)) = D(A(0)),$ for $t > 0.$ The next theorem ensures the existence and uniqueness of the Cauchy problem \eqref{eq:Cauchy}.
\begin{theorem}[\cite{Kato1970}]\label{th:KatoCauchy}
Assume that:
\begin{enumerate}
\item $\mathcal{Z} = D(A(0))$ is a dense subset of $H$ and $ D(A(t)) = D(A(0))$, for all $t > 0$,
\item 
$A(t)$ generates a strongly continuous semigroup on $H$. Moreover, the family $\{A(t) \colon t\in [0, T ]\}$ is stable with stability constants $C, \ m$ independent of $t$.
\item $\partial_t A(t)$ belongs to $L_{\ast}^\infty([0, T ], B(\mathcal{Z}, H))$, the space of equivalent classes of essentially bounded, strongly measure functions from $[0, T ]$ into the set $B(\mathcal{Z}, H)$ of bounded operators from $\mathcal{Z}$ into $H$.
\end{enumerate}
Then, problem~\eqref{eq:Cauchy} has a unique solution $U \in C([0, T ], \mathcal{Z}) \cap C^1 ([0, T ], H)$ for any initial data in $\mathcal{Z}$.
\end{theorem}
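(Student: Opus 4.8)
The final statement is Kato's non-autonomous generation theorem, so the plan is the classical construction of an \emph{evolution family} (propagator) $\{U(t,s)\colon 0\le s\le t\le T\}$ on $H$ for the family $\{A(t)\}$, after which the solution of \eqref{eq:Cauchy} is produced as $U(t)=U(t,0)U_0$. Equip $\mathcal{Z}$ with the graph norm of $A(0)$; by hypothesis (1) it is then a Banach space densely and continuously embedded in $H$ with $\mathcal{Z}=D(A(t))$ for all $t$, and by hypothesis (3) (integrating $\partial_t A$) the map $t\mapsto A(t)$ is Lipschitz continuous from $[0,T]$ into $B(\mathcal{Z},H)$; in particular each $A(t)\in B(\mathcal{Z},H)$. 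First I would discretize: for a partition $\Delta\colon 0=t_0<\cdots<t_N=T$ set $A_\Delta(r)=A(t_k)$ on $[t_k,t_{k+1})$ and let $U_\Delta(t,s)$ be the obvious concatenation of the semigroups $e^{\sigma A(t_k)}$ furnished by hypothesis (2). The stability hypothesis, written as $\bigl\|\prod_i e^{\sigma_i A(t'_i)}\bigr\|_{B(H)}\le C e^{m\sum_i\sigma_i}$ for $\sigma_i\ge 0$, yields the uniform estimate $\|U_\Delta(t,s)\|_{B(H)}\le C e^{m(t-s)}$, independent of $\Delta$.

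The crucial and most delicate step is to show that $U_\Delta(t,s)$ also restricts to a bounded operator on the smaller space $\mathcal{Z}$, with a bound $\|U_\Delta(t,s)\|_{B(\mathcal{Z})}\le\widetilde C e^{\widetilde m(t-s)}$ uniform in $\Delta$. This is Kato's admissibility/perturbation lemma: choosing $\lambda_0>m$ in the resolvent set of $A(0)$ and the isomorphism $S=\lambda_0-A(0)\colon\mathcal{Z}\to H$, one uses hypothesis (3) to show that conjugation by $S$ turns $\{A(t)\}$ into a family differing from $\{A(t)\}$ by operators bounded on $H$ uniformly in $t$; by the bounded-perturbation stability theorem the conjugated family is again stable on $H$, which is precisely stability of $\{A(t)\}$ on $\mathcal{Z}$, and this delivers the desired uniform $\mathcal{Z}$-bound for the approximants. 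I expect this to be the main obstacle: it is where hypothesis (3) is genuinely used and where all the domain and commutator bookkeeping lives.

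Granting these two steps, convergence is routine. For $x\in\mathcal{Z}$ and partitions $\Delta,\Delta'$, the telescoping identity
$$
U_{\Delta'}(t,s)x-U_\Delta(t,s)x=\int_s^t U_\Delta(t,r)\bigl[A_{\Delta'}(r)-A_\Delta(r)\bigr]U_{\Delta'}(r,s)x\,dr,
$$
together with the $B(H)$-bound on $U_\Delta(t,r)$, the $\mathcal{Z}$-bound on $U_{\Delta'}(r,s)x$, and $\|A_{\Delta'}(r)-A_\Delta(r)\|_{B(\mathcal{Z},H)}\to 0$ (from the modulus of continuity of $t\mapsto A(t)$ on a common refinement), shows that $U_\Delta(t,s)x$ is Cauchy in $H$, uniformly in $(t,s)$, as $|\Delta|\to 0$. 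By density and the uniform $H$-bound the limit defines a strongly continuous family $U(t,s)\in B(H)$ with $U(t,t)=I$, $U(t,r)U(r,s)=U(t,s)$, $\|U(t,s)\|_{B(H)}\le C e^{m(t-s)}$, which leaves $\mathcal{Z}$ invariant with the bound of the previous step, and for $x\in\mathcal{Z}$ the map $(t,s)\mapsto U(t,s)x$ is continuous into $\mathcal{Z}$ (weak $\mathcal{Z}$-continuity plus the uniform $\mathcal{Z}$-bound upgrades the strong $H$-continuity). Passing to the limit in $U_\Delta(t,s)x-x=\int_s^t A_\Delta(r)U_\Delta(r,s)x\,dr$ — legitimate because $r\mapsto A(r)U(r,s)x$ is continuous in $H$, being built from the continuous $r\mapsto U(r,s)x\in\mathcal{Z}$ and the continuous $t\mapsto A(t)\in B(\mathcal{Z},H)$ — gives $U(t,s)x-x=\int_s^t A(r)U(r,s)x\,dr$, so $t\mapsto U(t,s)x$ is $C^1$ into $H$ with $\partial_t U(t,s)x=A(t)U(t,s)x$; the symmetric computation yields $\partial_s U(t,s)x=-U(t,s)A(s)x$ for $x\in\mathcal{Z}$. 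Then $U(t):=U(t,0)U_0$ with $U_0\in\mathcal{Z}$ lies in $C([0,T],\mathcal{Z})\cap C^1([0,T],H)$ and solves \eqref{eq:Cauchy}.

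Finally, uniqueness: if $V\in C([0,T],\mathcal{Z})\cap C^1([0,T],H)$ also solves \eqref{eq:Cauchy}, fix $t$ and differentiate $s\mapsto U(t,s)V(s)$ on $[0,t]$; using $\partial_s U(t,s)=-U(t,s)A(s)$ on $\mathcal{Z}$ and $V'(s)=A(s)V(s)$, the derivative equals $-U(t,s)A(s)V(s)+U(t,s)A(s)V(s)=0$, so the map is constant and $V(t)=U(t,0)V(0)=U(t,0)U_0=U(t)$. As stressed above, all the real work is concentrated in the uniform $\mathcal{Z}$-stability of the approximants; everything after it is an application of the stability bounds and the fundamental theorem of calculus.
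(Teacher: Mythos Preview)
Your sketch is a faithful outline of Kato's original construction of the evolution family via step approximants, and the logic --- uniform $H$-stability from hypothesis (2), the conjugation/perturbation argument to lift stability to $\mathcal{Z}$ from hypothesis (3), convergence via the telescoping identity, and uniqueness by differentiating $s\mapsto U(t,s)V(s)$ --- is the standard one and is correct at the level of detail you give.

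However, note that the paper does \emph{not} prove this theorem: it is stated with the attribution \cite{Kato1970} and used as a black box, with no proof environment attached. So there is no ``paper's own proof'' to compare against; the authors simply invoke Kato's result and then verify its hypotheses for their specific operator $A(t)$ in the subsequent theorem. Your write-up is thus not competing with anything in the paper --- it is a sketch of the cited reference itself, and as such it matches the classical argument found in Kato's paper and in standard texts (e.g.\ Pazy, Chapter~5).
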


The task ahead is to apply the above result to ensure the existence of solutions for the linear system associated with \eqref{eq:KdV-KdV}. To do that, consider the following linearized system associated with \eqref{eq:KdV-KdV}, that is, consider the equation without $\omega(t,x)\omega_x(t,x)$ and $(\eta(t,x)\omega(t,x))_x$.
Following the ideas introduced by in \cite{xyl, Nicaise2006, Nicaise2009}, let us define the auxiliary variable
$$z(t,\rho) = \eta_x(t- \tau(t)\rho,L),$$
which satisfies the transport equation:
\begin{equation}\label{eq:tr}
\begin{cases}
\tau(t)z_t(t,\rho) + (1-\dot\tau(t)\rho)z_\rho(t,\rho) = 0, &t>0, \rho \in(0,1), \\
z(t,0) = \eta_x(t,L), \ z(0,\rho) = z_0(-\tau(0)\rho) & t>0, \  \rho \in (0,1).
\end{cases}
\end{equation}
Now, the space $H$ will be equipped with the inner product
\begin{equation}\label{inner}
\begin{aligned}
\left\langle \left(\eta,\omega,z\right), \left(\tilde\eta, \tilde\omega, \tilde{z}\right)
\right\rangle_{t}
=\ &
\left\langle\left(\eta, \omega \right),\left(	\tilde\eta , \tilde\omega\right)
\right\rangle_{X_0}
+ \lvert \beta\rvert  \tau(t)
\left\langle
	z, \tilde z
\right\rangle_{L^2(0,1)} ,
\end{aligned}
\end{equation}
for any $(\eta,\omega;z), (\tilde\eta,\tilde\omega;\tilde z)\in H$.

Now, we pick up $U = (\eta,\omega; z)^{T}$ and consider the time-dependent operator
$$A(t)\colon D(A(t))\subset {H}\to {H}$$ given by
\begin{equation}\label{eq:A}
A(t)\left(\eta, \omega , z\right) :=\left(-\omega_x-\omega_{xxx}, -\eta_x-\eta_{xxx}, \dfrac{\dot\tau(t)\rho -1}{\tau(t)} z_\rho\right),
\end{equation}
with domain defined by
\begin{equation}\label{DAA}
D(A(t))=
\left\lbrace
	\begin{aligned}
		(\eta,\omega) &\in \left[ H^3(0,L)\cap H_0^1(0,L) \right]^2, \\
		z & \in H^1(0,1),
	\end{aligned}
	\left\lvert
	\begin{aligned}
	&\eta_x(0)=0, z(0) = \eta_x(L),\\
	& \omega_x(L) = -\alpha \eta_x(L) + \beta z(1)
	\end{aligned}
	\right.
\right\rbrace.
\end{equation}
Whereupon, we rewrite~\eqref{eq:tr}-\eqref{DAA} as an abstract Cauchy problem \eqref{eq:Cauchy}. Moreover, note that $D(A(t))$ is independent of time $t$ since $D(A(t)) = D(A(0))$.

Subsequently, consider the triplet $\lbrace A, H, \mathcal{Z}\rbrace$, with $A = \left\lbrace A(t)\colon t\in[0,T] \right\rbrace$ for some $T>0$ fixed and $\mathcal{Z} = D(A(0))$.  Now, we can prove a well-posedness result of \eqref{eq:Cauchy} related to $\lbrace A, H, \mathcal{Z}\rbrace$.
\begin{theorem}
Assume that $\alpha$ and $\beta$ are real constants such that  \eqref {eq:CCond} holds. Taking $U_0 \in H$,  there exists a unique solution $U \in C([0, +\infty), H)$ to~\eqref{eq:Cauchy}. Moreover, if $U_0  \in D(A(0))$, then $U \in C([0, +\infty), D(A(0)))\cap C^1([0, +\infty), H).$
\end{theorem}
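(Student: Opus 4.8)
The plan is to obtain both assertions at once from Kato's Theorem~\ref{th:KatoCauchy}, applied to the triplet $\{A,H,\mathcal{Z}\}$ with $\mathcal{Z}=D(A(0))$, and then to pass from $U_0\in D(A(0))$ to an arbitrary $U_0\in H$ by density together with the uniform bound furnished by the stability of the family. That $D(A(t))=D(A(0))$ for every $t$ has already been recorded, and the density of $\mathcal{Z}$ in $H$ is routine, since the boundary constraints in \eqref{DAA} impose only finitely many linear conditions on otherwise unrestricted $H^3$ and $H^1$ functions. Thus the real work is hypothesis~(2) — each $A(t)$, up to a uniform shift, generates a contraction semigroup on $(H,\langle\cdot,\cdot\rangle_t)$ and the family $\{A(t):t\in[0,T]\}$ is stable with constants independent of $t$ — and hypothesis~(3).

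For the dissipativity I would fix $t$, take $U=(\eta,\omega,z)\in D(A(t))$, and compute $\langle A(t)U,U\rangle_t$. Two integrations by parts in $x$, using $\eta(0)=\eta(L)=\omega(0)=\omega(L)=\eta_x(0)=0$, annihilate every boundary contribution from the first two components except $\omega_x(L)\eta_x(L)$; inserting $\omega_x(L)=-\alpha\eta_x(L)+\beta z(1)$ and $z(0)=\eta_x(L)$ turns this into $-\alpha z(0)^2+\beta z(0)z(1)$. For the transport component, $|\beta|\tau(t)\langle\frac{\dot\tau(t)\rho-1}{\tau(t)}z_\rho,z\rangle_{L^2(0,1)}=\frac{|\beta|}{2}\int_0^1(\dot\tau(t)\rho-1)(z^2)_\rho\,d\rho$, and one integration by parts in $\rho$ yields $\frac{|\beta|}{2}(\dot\tau(t)-1)z(1)^2+\frac{|\beta|}{2}z(0)^2-\frac{|\beta|\dot\tau(t)}{2}\|z\|_{L^2(0,1)}^2$. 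Estimating the cross term by Young's inequality in the sharp form $\beta z(0)z(1)\le\frac{|\beta|}{2(1-d)}z(0)^2+\frac{|\beta|(1-d)}{2}z(1)^2$, the coefficient of $z(1)^2$ collapses to $\frac{|\beta|}{2}(\dot\tau(t)-d)\le 0$ by \eqref{eq:TauCond}, while the coefficient of $z(0)^2$ becomes $-\alpha+\frac{|\beta|}{2}+\frac{|\beta|}{2(1-d)}$, which is $\le 0$ precisely because of the gain restriction \eqref{eq:CCond}. Hence $\langle A(t)U,U\rangle_t\le\frac{|\beta|}{2}|\dot\tau(t)|\,\|z\|_{L^2(0,1)}^2\le\kappa\|U\|_t^2$ with $\kappa=\frac{\|\dot\tau\|_{L^\infty}}{2\tau(0)}$ independent of $t$ on each $[0,T]$, using $\tau\in W^{2,\infty}$ and $\tau(t)\ge\tau(0)$. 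Since the only time dependence of $\langle\cdot,\cdot\rangle_t$ is through the factor $\tau(t)\in[\tau(0),M]$, the norms $\|\cdot\|_t$ are uniformly equivalent and $\frac{d}{dt}\|U\|_t^2=|\beta|\dot\tau(t)\|z\|_{L^2(0,1)}^2\le c\|U\|_t^2$ uniformly; combined with the dissipativity of $A(t)-\kappa I$ and the range condition below, this is exactly the stability of the family in Kato's sense.

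The range condition — that $\lambda I-A(t)$ maps $D(A(t))$ onto $H$ for $\lambda>\kappa$ — is the step I expect to be the main obstacle. Given $(f,g,h)\in H$, the third equation $\lambda z+\frac{1-\dot\tau(t)\rho}{\tau(t)}z_\rho=h$ with $z(0)=\eta_x(L)$ is a well-posed linear first-order ODE on $(0,1)$ (its transport speed $\frac{1-\dot\tau(t)\rho}{\tau(t)}$ is strictly positive because $\dot\tau\le d<1$), so variation of parameters gives $z$, and in particular $z(1)$, explicitly as an affine function of the number $\eta_x(L)$ plus a term determined by $h$. Substituting into $\omega_x(L)=-\alpha\eta_x(L)+\beta z(1)$ converts the coupled, delay-dependent boundary condition into a Robin-type relation involving only $\eta_x(L)$ and $\omega_x(L)$, and what is left is a resolvent problem for the KdV--KdV pair $(\eta,\omega)$ in $[H^3(0,L)\cap H_0^1(0,L)]^2$ with this mixed boundary condition. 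This I would solve by the standard variational route: write a coercive bilinear form on an appropriate subspace of $[H_0^1(0,L)]^2$, invoke Lax--Milgram, and recover the $H^3$-regularity and the boundary traces a posteriori; then $z\in H^1(0,1)$ follows from the transport formula. Making the non-elliptic third-order operator $\partial_x+\partial_x^3$, together with these interlinked boundary conditions, fit such a framework is the technical heart of the argument.

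Finally, hypothesis~(3) is immediate: only the scalar coefficient $\frac{\dot\tau(t)\rho-1}{\tau(t)}$ in \eqref{eq:A} depends on $t$, and its derivative $\frac{\ddot\tau(t)\rho\,\tau(t)-(\dot\tau(t)\rho-1)\dot\tau(t)}{\tau(t)^2}$ is bounded on $[0,T]$ uniformly in $\rho\in[0,1]$ since $\tau\in W^{2,\infty}$ and $\tau\ge\tau(0)>0$; hence $\partial_tA(t)(\eta,\omega,z)=(0,0,\partial_t(\cdots)\,z_\rho)$ satisfies $\|\partial_tA(t)U\|_H\le C\|U\|_{\mathcal{Z}}$ with $C$ independent of $t$ and is strongly measurable in $t$, so $\partial_tA(t)\in L^\infty_\ast([0,T],B(\mathcal{Z},H))$. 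With (1)--(3) in hand, Theorem~\ref{th:KatoCauchy} gives, for each $T>0$ and every $U_0\in D(A(0))$, a unique $U\in C([0,T],D(A(0)))\cap C^1([0,T],H)$, which, $T$ being arbitrary, is a solution on $[0,+\infty)$. For $U_0\in H$ one approximates $U_0$ in $H$ by a sequence in $D(A(0))$; the associated solutions are Cauchy in $C([0,T],H)$ for each $T$ by the uniform bound $\|U(t)\|_H\le Ce^{\kappa t}\|U_0\|_H$ coming from the stability of the family, and their limit is the required $U\in C([0,+\infty),H)$, with uniqueness inherited in the limit.
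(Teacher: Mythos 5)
Your overall architecture (Kato's Theorem \ref{th:KatoCauchy} applied to $\{A,H,\mathcal{Z}\}$, quasi-dissipativity with a uniform shift, hypothesis (3) from $\tau\in W^{2,\infty}$, density to pass from $D(A(0))$ to $H$) is the paper's, and your dissipativity computation is correct: bounding $\beta z(0)z(1)$ by Young with weight $1-d$ and using \eqref{eq:CCond} is exactly equivalent to the paper's observation that the matrix $\Phi_{\alpha,\beta}$ is negative definite, and your treatment of hypothesis (3) matches the paper's. The genuine gap is the step you yourself flag as ``the technical heart'': the maximality of $A(t)$, i.e.\ surjectivity of $\lambda I-A(t)$. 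You reduce, as the paper does, to a resolvent problem for the pair $(\eta,\omega)$ with a boundary relation obtained by eliminating $z$, but then you only gesture at a Lax--Milgram argument on a subspace of $[H_0^1(0,L)]^2$. This route is not available as described: the operator $(\eta,\omega)\mapsto(-\omega_x-\omega_{xxx},\,-\eta_x-\eta_{xxx})$ is third order and skew to leading order, so there is no coercive bilinear form on an $H^1$-type space to which Lax--Milgram applies, and the elliptic-regularity bootstrap you invoke to recover $H^3$ and the traces has no analogue here. The paper closes this step differently: after solving the transport equation (as you do) it writes $z(1)=g_0(t)\eta_x(L)+g_h(t)$, checks $0<g_0(t)<1$, lifts the \emph{inhomogeneous} part $\beta g_h(t)$ of the boundary relation by the explicit corrector $\psi(x,t)=\frac{x(x-L)}{L}\beta g_h(t)$, verifies that the effective Robin coefficient $\tilde\alpha=\alpha-\beta g_0(t)$ is positive thanks to \eqref{eq:CCond}, and then invokes the known dissipativity of both $\hat A$ and its adjoint $\hat A^{\ast}$ (\cite[Proposition~4.1]{Capistrano2019}) together with the Lumer--Phillips theorem to conclude surjectivity.

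Two smaller omissions sit inside the same step and should be repaired even if you adopt a different method for the reduced problem: the boundary relation you obtain after eliminating $z$ is affine, not linear, in $\eta_x(L)$ (the $h$-dependent term must be lifted, as the paper does with $\psi$), and the sign of the effective coefficient $-\alpha+\beta g_0(t)$ must be checked to be negative --- this is where $0<g_0(t)<1$ and the gain condition \eqref{eq:CCond} enter again, and without it the reduced operator need not be dissipative. The remainder of your argument (uniform equivalence of the norms $\|\cdot\|_t$, the bound on $\frac{d}{dt}\|U\|_t^2$, and the approximation argument giving $U\in C([0,+\infty),H)$ for $U_0\in H$) is sound and consistent with the paper.
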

\begin{proof}
The result will be proved classically (see, for instance, \cite{Nicaise2009}). First, it is not difficult to see that $\mathcal{Z} = D(A(0))$ is a dense subset of $H$ and $D(A(t))=D(A(0))$, for all $t>0$. Therefore, the condition (1) of Theorem \ref{th:KatoCauchy} holds.

For the requirement (2) of Theorem~\ref{th:KatoCauchy}, observe that integrating by parts and using the boundary conditions, we have that
\begin{equation*}
\left\langle A(t) U, U \right\rangle_t - \kappa(t) \left\langle U, U \right\rangle_t
\leq \frac{1}{2} \left(\eta_x(L), \eta_x(t-\tau(t),L) \right)
\Phi_{\alpha,\beta}
\left(	\eta_x(L), \eta_x(t-\tau(t),L) \right)^T
\end{equation*}
where
\begin{equation*}
\kappa(t) = \frac{(\dot\tau(t)^2+1)^{\frac12}}{2\tau(t)} \quad \text{and}\quad
\Phi_{\alpha,\beta} = \begin{pmatrix}
-2\alpha + \lvert\beta\rvert & \beta\\
\beta & \lvert\beta\rvert (d -1 )
\end{pmatrix}.
\end{equation*}
Invoking \eqref {eq:CCond}, we deduce that $\Phi_{\alpha,\beta}$ is a negative definite matrix and consequently we get
$$\left\langle A(t) U, U \right\rangle_t - \kappa(t) \left\langle U,U \right\rangle_t \leq 0.$$
Thereby, $\tilde{A}(t) = A(t) - \kappa(t)I$ is dissipative.

On the other hand, we claim the following:
\begin{claim}\label{CL1}
For all $t\in[0,T]$, the operator $A(t)$ is maximal, or equivalently, we have that $\lambda I - A(t)$ is surjective,  for some  $\lambda > 0$.
\end{claim}

In fact, let us fix $t\in[0,T]$. Given $(f_1,f_2, h)^T \in H$, we look for  $U = (\eta,\omega, z )^T \in D(A(t))$ solution of
\begin{equation}\label{eq:WP_ewz}
(\lambda I - A(t))U = (f_1, f_2, h) \iff \begin{cases}
\lambda\eta + \omega_x + \omega_{xxx} = f_1, \\
\lambda\omega + \eta_x + \eta_{xxx} = f_2, \\
\lambda z + \left(\dfrac{1-\dot\tau(t)\rho}{\tau(t)}\right) z_\rho = h,\\
\eta(0) = \eta(L) = \omega(0) = \omega(L) = \eta_x(0)=0, \\
\omega_x(L) = -\alpha \eta_x(L) + \beta z(1), z(0) = \eta_x(L).
\end{cases}
\end{equation}
A straightforward computation gives that $z$ has the explicit form
\begin{equation*}
z(\rho) =
\begin{cases}\displaystyle
\eta_x(L) e^{-\lambda\tau(t)\rho} +\displaystyle \tau(t) e^{-\lambda\tau(t)\rho} \int_0^\rho e^{\lambda\tau(t)\sigma} h (\sigma)\,d\sigma, &\!\!\!\text{if }\dot\tau(t) = 0, \\[1mm]
\displaystyle
e^{\lambda\frac{\tau(t)}{\dot\tau(t)}\ln(1-\dot\tau(t)\rho)} \left[ \eta_x(L)
\displaystyle+
\int_0^\rho \frac{h(\sigma)\tau(t)}{1-\dot\tau(t)\sigma}  e^{-\lambda\frac{\tau(t)}{\dot\tau(t)}\ln(1-\dot\tau(t)\sigma)}\,d\sigma \right], &\!\!\! \text{if }\dot\tau(t) \neq 0.
\end{cases}
\end{equation*}
In particular, $z(1) = \eta_x(L) g_0(t) + g_h(t)$, where
\begin{equation*}
g_0(t) =
\begin{cases}\displaystyle
e^{-\lambda\tau(t)}, &\text { if } \dot\tau(t) = 0,\\ \displaystyle
e^{\lambda\frac{\tau(t)}{\dot\tau(t)}\ln(1-\dot\tau(t))}, & \text { if }\dot\tau(t) \neq 0,
\end{cases}
\end{equation*}
and
\begin{equation*}
g_h(t)=
\begin{cases}\displaystyle
 \tau(t) e^{-\lambda \tau(t)} \int_0^1 e^{\lambda \tau(t) \sigma} h(\sigma) d \sigma, & \text { if } \dot{\tau}(t)=0, \\ \displaystyle
e^{\lambda \frac{\tau(t)}{\dot\tau(t)} \ln(1-\dot{\tau}(t))} \int_0^1 \frac{h(\sigma) \tau(t)}{1-\dot{\tau}(t) \sigma} e^{-\lambda \frac{\tau(t)}{\dot{\tau}(t)} \ln (1-\dot{\tau}(t) \sigma)} d \sigma, & \text { if } \dot{\tau}(t) \neq 0.
\end{cases}
\end{equation*}
This, together with \eqref{eq:WP_ewz}, leads to claim that  $\eta$ and $\omega$ should satisfy
\begin{equation}\label{eq:WP_ew1}
\begin{cases}
\lambda\eta + \omega_x + \omega_{xxx} = f_1, \\
\lambda\omega + \eta_x + \eta_{xxx} = f_2,
\end{cases}
\end{equation}
with boundary conditions
\begin{equation}\label{eq:WP_ew1a}
\begin{cases}
\eta(0) = \eta(L) = \omega(0) = \omega(L) = \eta_x(0)=0, \\
\omega_x(L) = (-\alpha+\beta g_0(t)) \eta_x(L) + \beta g_h(t).
\end{cases}
\end{equation}
Pick $\psi(x,t) = \dfrac{x(x-L)}{L}\beta g_h(t) \in C^\infty([0,L])$ and let $\hat{\omega}:=\omega - \psi$. Then, the system \eqref{eq:WP_ew1} can be rewritten as follows:
\begin{equation}\label{eq:WP_ew2}
\begin{cases}
\lambda\eta + \omega_x + \omega_{xxx} = f_1-\psi_x =: \tilde{f_1}, \\
\lambda\omega + \eta_x + \eta_{xxx} = f_2- \lambda\psi =: \tilde{f_2},
\end{cases}
\end{equation}
and must be coupled with \eqref{eq:WP_ew1a}. Here, let us mention that for the sake of presentation clarity, we still use $\omega$ after translation. One can check that $0<g_0(t)<1$. Indeed, if $\dot{\tau}(t)=0$, then we clearly have $0<g_0(t)<1$. In turn, if $\dot{\tau}(t)\neq0$, then we have two cases to consider, namely $0<\dot{\tau}(t)<1$ and $\dot{\tau}(t)<0$. In the first case, we have
$\ln(1-\dot{\tau}(t))<\ln(1)=0$ and $\lambda\tau(t)/\dot{\tau}(t)>0,$ which implies that $0<g_0(t)=e^{\lambda\frac{\tau(t)}{\dot\tau(t)}\ln(1-\dot\tau(t))}<e^0=1.$ In the second case, we have $\ln(1-\dot{\tau}(t))>\ln(1)=0$ and $\lambda\tau(t)/\dot{\tau}(t)<0,$ which ensures that $0<g_0(t)=e^{\lambda\frac{\tau(t)}{\dot\tau(t)}\ln(1-\dot\tau(t))}<e^0=1.$
We infer from this discussion that  $-\tilde\alpha := -\alpha + \beta g_0(t) < 0$, thanks to \eqref {eq:CCond}. Thereby, our Claim \ref{CL1} is reduced to proving that $\lambda I - \hat{A}$ is surjective, where $\hat{A}$ is given by
$$
\hat{A} (\eta, \omega) = (-\omega_x-\omega_{xxx}, -\eta_x-\eta_{xxx}),
$$
while its dense domain is
$$
D(\hat{A}) := \left\lbrace
(\eta,\omega)\in \left[H^3(0,L)\cap H_0^1(0,L)\right]^2
\colon
\eta_x(0) = 0,\ \omega_x(L) = -\tilde\alpha \eta_x(L)
\right\rbrace \subset X_0.
$$
Thanks to \cite[Proposition~4.1]{Capistrano2019}, the operators $\hat{A}$ and $\hat{A}^\ast$ are dissipative, and the desired result follows by Lummer-Phillips Theorem (see, for example, \cite{Pazy}). This shows the Claim \ref{CL1}. Consequently,  $\tilde{A}(t)$ generates a strongly semigroup on $H$ and $\tilde{A} = \{\tilde{A}(t), t \in [0, T ]\}$ is a stable family of generators in $H$ with a stability constant independent of $t$, and hence the condition (2) of Theorem \ref{th:KatoCauchy} is satisfied.

Finally, due to the fact that $\tau \in W^{2,\infty}([0, T ])$ for all $T>0$, we have
$$
\dot{\kappa}(t)=\frac{\ddot{\tau}(t) \dot{\tau}(t)}{2 \tau(t)\left(\dot{\tau}(t)^2+1\right)^{1 / 2}}-\frac{\dot{\tau}(t)\left(\dot{\tau}(t)^2+1\right)^{1 / 2}}{2 \tau(t)^2}
$$
is bounded on $[0, T]$ for all $T>0$. Moreover,
\begin{equation*}
\frac{d}{dt} A(t) U=\left(0, 0 ,\frac{\ddot{\tau}(t) \tau(t) \rho-\dot{\tau}(t)(\dot{\tau}(t) \rho-1)}{\tau(t)^2} z_\rho\right),
\end{equation*}
while the coefficient of $z_\rho$ is bounded on $[0, T]$ and the regularity (3) of Theorem~\ref{th:KatoCauchy} is fulfilled.

As a consequence, all the assumptions of Theorem~\ref{th:KatoCauchy} are verified. Therefore, for $U_0 \in D({A}(0))$, the Cauchy problem
\begin{equation*}
\begin{split}
\tilde{U}_t(t)=\tilde{A}(t) \tilde{U}(t), \ \tilde{U}(0)=U_0,&\quad t>0,
\end{split}
\end{equation*}
has a unique solution $\tilde{U} \in C([0, \infty), H)$ and $\tilde{U} \in C([0, \infty), D({A}(0))) \cap C^1([0, \infty), H)$, and consequently the solution of~\eqref{eq:Cauchy} is $U(t)=e^{\int_0^t \kappa(s) d s} \tilde{U}(t)$.
\end{proof}

The next proposition states that the energy \eqref{eq:En} is decreasing along the solutions of \eqref{eq:Cauchy}. The proof is straightforward and hence omitted.

\begin{proposition}\label{pr:Diss}
Suppose $\alpha$ and $\beta$ are real constants such that~\eqref{eq:CCond} holds. Then, for any mild solution of~\eqref{eq:Cauchy} the energy $E(t)$ defined by~\eqref{eq:En} is non-increasing and
\begin{equation}\label{eq:EnDiss3}
\frac{d}{dt}E(t)
 =
 \frac{1}{2} \begin{pmatrix}
\eta_x(L) \\ \eta_x(t-\tau(t),L)
\end{pmatrix}^{T}
\Phi_{\alpha,\beta}
\begin{pmatrix}
\eta_x(L) \\ \eta_x(t-\tau(t),L)
\end{pmatrix}.
\end{equation}
\end{proposition}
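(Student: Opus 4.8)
\textbf{Plan of proof for Proposition \ref{pr:Diss}.} The goal is the energy dissipation identity \eqref{eq:EnDiss3}. The plan is to differentiate $E(t)$ as defined in \eqref{eq:En}, treating the two pieces separately: the $X_0$-part $\frac12\int_0^L(\eta^2+\omega^2)\,dx$ and the delay-memory part $\frac{|\beta|}{2}\tau(t)\int_0^1 z^2(t,\rho)\,d\rho$, where $z(t,\rho)=\eta_x(t-\tau(t)\rho,L)$ solves the transport equation \eqref{eq:tr}. By density it suffices to argue for $U_0\in D(A(0))$, so that all the integrations by parts below are justified; the general case of a mild solution follows by passing to the limit.

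First I would handle the $X_0$-term. Differentiating under the integral and using the linearized equations $\eta_t=-\omega_x-\omega_{xxx}$ and $\omega_t=-\eta_x-\eta_{xxx}$, one gets $\frac{d}{dt}\frac12\int_0^L(\eta^2+\omega^2)\,dx = -\int_0^L\bigl(\eta\omega_x+\eta\omega_{xxx}+\omega\eta_x+\omega\eta_{xxx}\bigr)\,dx$. Integrating by parts and invoking the boundary conditions $\eta(t,0)=\eta(t,L)=\omega(t,0)=\omega(t,L)=\eta_x(t,0)=0$ together with $\omega_x(t,L)=-\alpha\eta_x(t,L)+\beta\eta_x(t-\tau(t),L)$, all the bulk terms cancel in pairs and the boundary contributions collapse; the surviving terms should be $-\tfrac12\eta_x^2(t,L)-\eta_x(t,L)\omega_x(t,L)$ evaluated using the feedback law, i.e.\ something of the form $\bigl(-\alpha+\tfrac12\bigr)\eta_x^2(t,L)-\beta\,\eta_x(t,L)\eta_x(t-\tau(t),L)$ up to a careful accounting of the $\tfrac12$. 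This is the routine step: the only care needed is bookkeeping the $x=L$ boundary terms coming from $\int\eta\omega_{xxx}$ and $\int\omega\eta_{xxx}$.

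Next I would differentiate the memory term. Write $I(t)=\int_0^1 z^2(t,\rho)\,d\rho$, so the term is $\frac{|\beta|}{2}\tau(t)I(t)$ and its derivative is $\frac{|\beta|}{2}\dot\tau(t)I(t)+\frac{|\beta|}{2}\tau(t)\dot I(t)$. From \eqref{eq:tr}, $\tau(t)z_t=-(1-\dot\tau(t)\rho)z_\rho$, hence $\tau(t)\dot I(t)=2\int_0^1\tau(t)z z_t\,d\rho=-2\int_0^1(1-\dot\tau(t)\rho)z z_\rho\,d\rho=-\int_0^1(1-\dot\tau(t)\rho)\partial_\rho(z^2)\,d\rho$. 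Integrating by parts in $\rho$, the boundary terms give $-\bigl[(1-\dot\tau(t))z^2(t,1)-z^2(t,0)\bigr]$ and the interior term gives $-\dot\tau(t)\int_0^1 z^2\,d\rho=-\dot\tau(t)I(t)$; the latter exactly cancels the $\frac{|\beta|}{2}\dot\tau(t)I(t)$ piece. Using $z(t,0)=\eta_x(t,L)$ and $z(t,1)=\eta_x(t-\tau(t),L)$, the memory term therefore contributes $\frac{|\beta|}{2}\eta_x^2(t,L)-\frac{|\beta|}{2}(1-\dot\tau(t))\eta_x^2(t-\tau(t),L)$.

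Finally I would add the two contributions and organize the result into the quadratic form. Setting $p=\eta_x(t,L)$ and $q=\eta_x(t-\tau(t),L)$, the sum has the shape $\tfrac12\bigl(-2\alpha+|\beta|\bigr)p^2+\beta pq+\tfrac12|\beta|(\dot\tau(t)-1)q^2$, which is precisely $\tfrac12(p,q)\,\Phi_{\alpha,\beta}\,(p,q)^T$ after replacing $\dot\tau(t)$ by its bound; to be safe I would keep $\dot\tau(t)$ and then note $\dot\tau(t)\le d$ only at the very end, or alternatively observe that \eqref{eq:EnDiss3} as written uses $d$ and that the same computation with $\dot\tau(t)\le d$ and $|\beta|(\dot\tau(t)-1)\le |\beta|(d-1)$ gives the stated inequality form of the matrix. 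Non-increasingness of $E$ then follows since \eqref{eq:CCond} makes $\Phi_{\alpha,\beta}$ negative definite, as already recorded in the proof of the previous theorem. The main obstacle, such as it is, is purely the boundary-term bookkeeping in the $X_0$-part: one must be scrupulous that the $\eta_x^2(t,L)$ coefficient from $\int\eta\omega_{xxx}+\int\omega\eta_{xxx}$ combines correctly with the $\frac{|\beta|}{2}\eta_x^2(t,L)$ from the transport term to yield the clean $\tfrac12(-2\alpha+|\beta|)$ entry; everything else cancels by design of the inner product \eqref{inner} and the weight $\tau(t)$ in \eqref{eq:En}.
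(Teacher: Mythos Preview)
Your plan is correct and is exactly the standard computation the paper has in mind (the paper itself omits the proof as ``straightforward''). One small correction in your intermediate bookkeeping for the $X_0$-part: the only surviving boundary term from $-\int_0^L(\eta\omega_{xxx}+\omega\eta_{xxx})\,dx$ is $+\eta_x(t,L)\omega_x(t,L)$, with no extra $-\tfrac12\eta_x^2(t,L)$; plugging in the feedback law gives $-\alpha\,\eta_x^2(t,L)+\beta\,\eta_x(t,L)\eta_x(t-\tau(t),L)$, and together with your (correct) memory-term contribution this yields precisely the quadratic form $\tfrac12(-2\alpha+|\beta|)p^2+\beta pq+\tfrac12|\beta|(\dot\tau(t)-1)q^2$ that you record at the end. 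Your observation that the $(2,2)$ entry carries $\dot\tau(t)$ rather than $d$, so that \eqref{eq:EnDiss3} is really an inequality once $\dot\tau(t)\le d$ is used, is well taken and is indeed how the paper uses this proposition downstream.
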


We end this section by giving \emph{a priori} estimates and the Kato smoothing effect which are essential to obtain the well-posedness of the system~\eqref{eq:KdV-KdV}. Here, we consider $(S_t(s))_{s\geq 0}$ to be the semigroup of contractions associated with the operator $A(t)$.
\begin{proposition}\label{pr:Kato}
	Let $\alpha$ and $\beta$ are real constant  such that~\eqref{eq:CCond} holds. Then, the map
	\begin{equation*}
		(\eta_0,\omega_0;z_0)\in{H} \mapsto (\eta,\omega;z) \in \mathcal{B} \times C(0,T; L^2(0,1))
	\end{equation*}
	is well-defined, continuous, and fulfills
	\begin{equation}\label{eq:Kato1}
		\lVert
		(\eta,\omega)
		\rVert_{X_0}^2
		+ \lvert \beta \rvert
		\lVert
		z
		\rVert_{L^2(0,1)}^2
		\leq
		\lVert
		(\eta_0,\omega_0)
		\rVert_{X_0}^2
		+ \lvert \beta \rvert
		\lVert
		z_0(-\tau(0)\cdot)
		\rVert_{L^2(0,1)}^2,
	\end{equation}
	Furthermore, for every $(\eta_0,\omega_0, z_0)\in {H}$, we have that
	\begin{equation}\label{eq:Katotr0}
		\lVert \eta_x(\cdot,L) \rVert_{L^2(0,T)}^2
		+\lVert z(\cdot,1) \rVert_{L^2(0,T)}^2
		\leq  \lVert(\eta_0,\omega_0)\rVert_{X_0}^2 +
		\lVert z_0(-\tau(0)\cdot)\rVert_{L^2(0,1)}^2.
	\end{equation}
	
	Moreover, the Kato smoothing effect  is verified
	\begin{equation}\label{eq:Kato2}
		\int_0^T\int_0^L
		\left(\eta_x^2+\omega_x^2\right)
		\,dx\,dt
		\leq
		C(L,T,\alpha)
		\left(
		\lVert (\eta_0,\omega_0) \rVert_{X_0}^2
		+
		\lVert z_0(-\tau(0)\cdot) \rVert_{L^2(0,1)}^2
		\right).
	\end{equation}
	Finally, for the initial data, we have the following estimates
	\begin{equation}\label{eq:Kato3}
		\begin{split}
		\lVert
		(\eta_0,\omega_0)
		\rVert_{X_0}^2
		\leq&
		\frac{1}{T}
		\lVert
		(\eta,\omega)
		\rVert_{L^2(0,T; X_0)}^2
		\\&+(2\alpha+\lvert\beta\rvert)
		\lVert
		\eta_x(\cdot,L)
		\rVert_{L^2(0,T)}^2
		+\lvert\beta\rvert
		\lVert
		z(\cdot,1)
		\rVert_{L^2(0,1)}^2
		\end{split}
	\end{equation}
	and
	\begin{equation}\label{eq:Kato4}
		\lVert z_0(-\tau(0)\cdot)\rVert_{L^2(0,1)}^2 \leq
		C_1(d,M)\left(\lVert z(T,\cdot )\rVert_{L^2(0,1)}
		+  \lVert z(\cdot, 1) \rVert_{L^2(0,T)}^2\right).
	\end{equation}
\end{proposition}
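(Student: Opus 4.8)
\emph{Overall strategy.} The plan is to prove all the estimates first for strong solutions, that is, for data $U_0=(\eta_0,\omega_0,z_0)\in D(A(0))$, for which the regularity $U\in C([0,T],D(A(0)))\cap C^1([0,T],H)$ obtained above legitimizes every integration by parts, and then to pass to arbitrary data in $H$ by density. Since the solution map $U_0\mapsto U$ is linear and produces $U\in C([0,\infty),H)$, the estimates themselves — in particular the smoothing bound \eqref{eq:Kato2}, which supplies the missing $L^2(0,T;[H^1(0,L)]^2)$ regularity and hence membership in $\mathcal B$ — yield the well-definedness and the continuity of the claimed map. Throughout, I work with the linearized system \eqref{eq:Cauchy} and use repeatedly the relations built into $D(A(t))$ together with $z(t,0)=\eta_x(t,L)$, $z(t,1)=\eta_x(t-\tau(t),L)$ and $z(0,\rho)=z_0(-\tau(0)\rho)$.

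\emph{The dissipative estimates \eqref{eq:Kato1}--\eqref{eq:Katotr0}.} Inequality \eqref{eq:Kato1} is a direct consequence of Proposition \ref{pr:Diss} (the energy $E$ is non-increasing), combined with $\tau(0)\le\tau(t)\le M$ to handle the time-weight on the $z$-term. For \eqref{eq:Katotr0} I would integrate the dissipation identity \eqref{eq:EnDiss3} on $[0,T]$. With $v(t)=(\eta_x(t,L),z(t,1))^{T}$, condition \eqref{eq:CCond} makes $\Phi_{\alpha,\beta}$ negative definite — equivalently, it is exactly what is needed to pick $\eps\in(0,1-d)$ in $|\beta\,\eta_x(L)z(1)|\le\frac{|\beta|}{2\eps}\eta_x(L)^2+\frac{|\beta|\eps}{2}z(1)^2$ — so there is $c>0$ with $-\tfrac12 v^{T}\Phi_{\alpha,\beta}v\ge c|v|^2$, whence
\[
c\int_0^T\big(\eta_x(t,L)^2+z(t,1)^2\big)\,dt\le E(0)-E(T)\le E(0),
\]
and $E(0)$ is controlled by $\|(\eta_0,\omega_0)\|_{X_0}^2+\|z_0(-\tau(0)\cdot)\|_{L^2(0,1)}^2$, which gives \eqref{eq:Katotr0}.

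\emph{The Kato smoothing effect \eqref{eq:Kato2}.} This is the core of the statement, and the step I expect to demand the most care. I would multiply the first equation of the linearized system by $x\omega$ and the second by $x\eta$, add them, and integrate over $(0,L)$. The decisive point is that, with this pairing, the third-order terms decouple into $\int_0^L x\omega\,\omega_{xxx}\,dx$ and $\int_0^L x\eta\,\eta_{xxx}\,dx$, each handled as in the classical single KdV multiplier computation; because $\eta$ and $\omega$ vanish at both endpoints, all boundary contributions at $x=0$ cancel and one is left with
\[
\frac{d}{dt}\int_0^L x\eta\omega\,dx+\frac32\int_0^L\big(\eta_x^2+\omega_x^2\big)\,dx=\frac12\int_0^L(\eta^2+\omega^2)\,dx+\frac{L}{2}\big(\eta_x(L)^2+\omega_x(L)^2\big).
\]
Integrating on $[0,T]$: the endpoint terms $\int_0^L x\eta\omega\,dx$ at $t=0,T$ are bounded by $\tfrac{L}{2}\|(\eta,\omega)\|_{X_0}^2$ and then by \eqref{eq:Kato1}; $\int_0^T\!\int_0^L(\eta^2+\omega^2)$ by $T$ times the same; and, using $\omega_x(t,L)=-\alpha\eta_x(t,L)+\beta z(t,1)$ so that $\omega_x(t,L)^2\le 2\alpha^2\eta_x(t,L)^2+2\beta^2 z(t,1)^2$, the last boundary term by \eqref{eq:Katotr0}. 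This gives \eqref{eq:Kato2}. The hard part is the bookkeeping of the integrations by parts: verifying that $\tfrac{d}{dt}\int x\eta\omega\,dx$ is the only time derivative that appears and that, after all boundary terms are combined, nothing uncontrolled (no term in $\eta_{xx}(L)$, $\omega_{xx}(L)$, or in derivatives at $x=0$) survives.

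\emph{The reverse estimates \eqref{eq:Kato3}--\eqref{eq:Kato4}.} These are obtained by reading the conservation laws backwards in time. The basic $L^2$-identity $\tfrac12\tfrac{d}{dt}\|(\eta,\omega)(t)\|_{X_0}^2=\eta_x(t,L)\omega_x(t,L)=-\alpha\eta_x(t,L)^2+\beta\,\eta_x(t,L)z(t,1)$ yields
\[
\|(\eta_0,\omega_0)\|_{X_0}^2=\|(\eta,\omega)(t)\|_{X_0}^2+2\int_0^t\big(\alpha\eta_x(L)^2-\beta\,\eta_x(L)z(1)\big)\,ds ;
\]
averaging over $t\in[0,T]$ and using $|\beta\,\eta_x(L)z(1)|\le\tfrac{|\beta|}{2}(\eta_x(L)^2+z(1)^2)$ reproduces precisely the coefficients $1/T$, $2\alpha+|\beta|$, $|\beta|$ of \eqref{eq:Kato3}. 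For \eqref{eq:Kato4}, integrating $\tfrac{d}{dt}\big[\tfrac{\tau(t)}{2}\int_0^1 z^2\,d\rho\big]=-\tfrac12(1-\dot\tau(t))z(t,1)^2+\tfrac12 z(t,0)^2$ on $[0,T]$ and discarding the nonnegative term $\tfrac12\int_0^T z(t,0)^2\,dt$ gives
\[
\frac{\tau(0)}{2}\int_0^1 z(0,\rho)^2\,d\rho\le\frac{\tau(T)}{2}\int_0^1 z(T,\rho)^2\,d\rho+\frac12\int_0^T(1-\dot\tau(t))\,z(t,1)^2\,dt ,
\]
and then $\tau(0)\le\tau(t)\le M$, together with the bound $1-\dot\tau\le 1+\|\dot\tau\|_{L^\infty}<\infty$ furnished by \eqref{eq:TauCond}, yields \eqref{eq:Kato4}. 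Finally, the density argument transfers all of \eqref{eq:Kato1}--\eqref{eq:Kato4} from $D(A(0))$ to all of $H$, completing the proof.
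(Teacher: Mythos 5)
Your proposal is correct and follows essentially the same route as the paper: the dissipation identity coming from the negative definiteness of $\Phi_{\alpha,\beta}$ for \eqref{eq:Kato1}--\eqref{eq:Katotr0}, the Morawetz multiplier $x$ for the smoothing estimate \eqref{eq:Kato2}, and multiplying the transport equation by $z$ for \eqref{eq:Kato4}. Your time-averaging of the pointwise $L^2$ identity for \eqref{eq:Kato3} is just the Fubini form of the paper's $(T-t)$ multiplier, so the two arguments coincide (up to harmless multiplicative constants that the paper also suppresses).
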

\begin{proof}
	From~\eqref{eq:EnDiss3} and using that $\Phi_{\alpha,\beta}$ is  a symmetric negative definite matrix we obtain that $E'(t) + \eta_x^2(t,L) + z^2(t,1) \leq 0$. Integrating in $[0,s]$, for $0\leq s \leq T$, we get
	\begin{equation}
		E(s) + \int_0^s \eta_x^2(t,L)\,dt + \int_0^s z^2(t,1)\,dt \leq E(0),
	\end{equation}
	and \eqref{eq:Kato1} is obtained. Taking $s=T$ and since $E(t)$ is a non-increasing function (see Proposition~\ref{pr:Diss}), the estimate~\eqref{eq:Katotr0} holds. Now, multiplying the first equation of the linearized system associated with \eqref{eq:KdV-KdV} by $x\omega$ and the second one by $x\eta$, adding the results, then integrating by parts in $(0,L)\times(0,T)$ and using~\eqref{eq:Katotr0}, we obtain
	\begin{equation}
		\begin{aligned}
			&\frac{3}{2}\int_0^T\int_0^L
			\eta_x^2+\omega_x^2
			\,dx\,dt
			\leq
			(L+T) \lVert (\eta_0,\omega_0) \rVert_{X_0}^2
			\\&+
			\left(\alpha^2+\frac{1}{2}\right) L
			\left(
			\lVert\eta_x(\cdot,L) \rVert_{L^2(0,T)}^2
			+
			\lVert z(\cdot,1)\rVert_{L^2(0,T)}^2
			\right) \\
			\leq& C(L,T,\alpha)\left( \lVert(\eta_0,\omega_0)\rVert_{X_0}^2
			+\lVert z_0(-\tau(0)\cdot) \rVert_{L^2(0,1)}^2\right),
		\end{aligned}
	\end{equation}
	where $C(L,T,\alpha) := \max\left\lbrace
	1, L+T, \left(\alpha^2+\frac{1}{2}\right) L
	\right\rbrace,$
	showing \eqref{eq:Kato2}. Secondly, we multiply the first equation of the linearized system associated with \eqref{eq:KdV-KdV} by $(T-t)\eta$, while the second one is multiplied by $(T-t)\omega$. Then,  adding the results yields
	\begin{equation*}
 \begin{split}
		\frac{T}{2}\lVert (\eta_0,\omega_0) \rVert_{X_0}^2
		\leq&
		\frac{1}{2}\lVert (\eta,\omega) \rVert_{L^2(0,T,X_0)}^2
		+T
		\left(
		\alpha+\frac{\lvert\beta\rvert}{2}
		\right)
		\int_0^T
		\eta_x^2(t,L)\,dt\\
		&+T\frac{\lvert\beta\rvert}{2}
		\int_0^T z^2(t,1)\,dt,
 \end{split}
	\end{equation*}
	where we have used Young's inequality, verifying  \eqref{eq:Kato3}. Finally, multiplying~\eqref{eq:tr}$_1$ by $z$ and integrating by parts in $(0,T)\times(0,1)$,
	\begin{equation*}
		\tau_0 \int_0^1 z_0^2(-\tau(0)\rho)\,d\rho \leq
		\int_0^T (1-\dot\tau(t))z^2(t,1)\,dt
		+\tau(T)\int_0^1 z^2(T,\rho)\,d\rho,
	\end{equation*}
	giving \eqref{eq:Kato4}.
\end{proof}

The next result ensures the existence of solutions to the KdV-KdV system with source terms.

\begin{theorem}\label{th:APsou}
	Suppose  that~\eqref{eq:CCond} and~\eqref{eq:TauCond} holds. Let $U_0 = (\eta_0, \omega_0 , z_0) \in H$ and the source terms $(f_1,f_2)\in L^1(0,T; X_0)$. Then there exists a unique solution $U = (\eta,\omega, z) \in C([0,T], H)$ to
	\begin{equation}\label{eq:KK+Sou}
		\begin{cases}
			\eta_t(t,x)+\omega_x(t,x)+\omega_{xxx}(t,x) =f_1, & t>0, x\in(0,L), \\
			\omega_t(t,x)+\eta_x(t,x)+\eta_{xxx}(t,x) = f_2, & t>0, x\in(0,L), 
		\end{cases}
	\end{equation}
	with boundary condition as in \eqref{eq:KdV-KdV}.
	Moreover, for $T > 0$, the following estimates hold
	\begin{equation}\label{eq:APsou1}
\left\{	\begin{array}{l}
		\lVert (\eta,\omega; z) \rVert_{C([0,T], H)} \leq
		C (\lVert (\eta_0,\omega_0, z_0) \rVert_H + \lVert (f,g) \rVert_{L^1(0,T, X_0)} ),
\\[2mm]
		\lVert(\eta_x(\cdot,L), z(\cdot,1))\rVert_{[L^2(0,T)]^{2}}^2
		\leq C(\lVert{(\eta_0,\omega_0,z_0)}\rVert_{H}^2 + \lVert (f,g) \rVert_{L^1(0,T, X_0)}^2),
\\[2mm]
\lVert (\eta,\omega) \rVert_{L^2([0,T], X_1)} \leq
		C (\lVert (\eta_0,\omega_0, z_0) \rVert_H + \lVert (f,g) \rVert_{L^1(0,T, X_0)}),
		\end{array}
		\right.
	\end{equation}
	for some constant $C>0$.
\end{theorem}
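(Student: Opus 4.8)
The plan is to treat \eqref{eq:KK+Sou} together with the boundary conditions of \eqref{eq:KdV-KdV} as the abstract nonhomogeneous Cauchy problem
\[
\dot U(t) = A(t)U(t) + F(t), \qquad U(0)=U_0,
\]
where $F(t) = (f_1(t),f_2(t),0)\in H$; note that the transport component carries no source and that the boundary relations $z(t,0)=\eta_x(t,L)$, $\omega_x(t,L)=-\alpha\eta_x(t,L)+\beta z(t,1)$ are already built into $D(A(t))$. The previous theorem, obtained through Theorem~\ref{th:KatoCauchy}, furnishes the evolution family $\{\mathbb{U}(t,s)\}_{0\le s\le t\le T}$ generated by $\{\widetilde A(t)\}$, hence by $\{A(t)\}$ after multiplication by $e^{\int_s^t\kappa}$. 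Since $\widetilde A(t)$ is dissipative, $\kappa$ is bounded on $[0,T]$, and the time‑dependent norms induced by \eqref{inner} are equivalent uniformly in $t\in[0,T]$ because $0<\tau(0)\le\tau(t)\le M$, one has $\|\mathbb{U}(t,s)\|_{\mathcal L(H)}\le C(T)$ for all $0\le s\le t\le T$. Define the mild solution by the Duhamel formula
\[
U(t) = \mathbb{U}(t,0)U_0 + \int_0^t \mathbb{U}(t,s)F(s)\,ds .
\]
The strong continuity of $\mathbb{U}$ and $F\in L^1(0,T;H)$ give $U\in C([0,T];H)$, and taking $H$‑norms directly yields the first estimate of \eqref{eq:APsou1}. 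Uniqueness is immediate from linearity: the difference of two solutions solves the homogeneous problem with zero data, hence vanishes by that estimate (equivalently, by Proposition~\ref{pr:Diss}).

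There remain the trace bound and the Kato smoothing bound. I would first prove them for regular data, $U_0\in D(A(0))$ and $(f_1,f_2)\in C^1([0,T];X_0)$, for which the classical part of the previous theorem adapts to give $U\in C([0,T];D(A(0)))\cap C^1([0,T];H)$, so that every integration by parts below is licit. Differentiating the energy \eqref{eq:En} along such a solution reproduces the identity \eqref{eq:EnDiss3} plus the extra term $\langle(f_1,f_2),(\eta,\omega)\rangle_{X_0}$; since $\Phi_{\alpha,\beta}$ is negative definite, integrating on $[0,T]$, using $|\langle(f_1,f_2),(\eta,\omega)\rangle_{X_0}|\le\|(f_1,f_2)(t)\|_{X_0}\|U(t)\|_{X_0}$, and inserting the already‑proven bound on $\|U\|_{C([0,T];H)}$ gives
\[
\|\eta_x(\cdot,L)\|_{L^2(0,T)}^2 + \|z(\cdot,1)\|_{L^2(0,T)}^2 \le C\big(\|U_0\|_H^2 + \|(f_1,f_2)\|_{L^1(0,T;X_0)}^2\big),
\]
which is the second estimate. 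For the third, I would repeat the multiplier computation of Proposition~\ref{pr:Kato}: multiply the first equation of \eqref{eq:KK+Sou} by $x\omega$, the second by $x\eta$, add and integrate over $(0,L)\times(0,T)$. The calculation is identical to the homogeneous case except for the additional source contribution $\int_0^T\!\int_0^L x\,(f_1\omega+f_2\eta)\,dx\,dt$, bounded by $L\,\|(f_1,f_2)\|_{L^1(0,T;X_0)}\|U\|_{C([0,T];X_0)}$; combining this with the trace bound and the first estimate produces $\int_0^T\!\int_0^L(\eta_x^2+\omega_x^2)\,dx\,dt\le C(\|U_0\|_H+\|(f_1,f_2)\|_{L^1(0,T;X_0)})^2$, i.e. the $L^2(0,T;X_1)$ bound.

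Finally, I would remove the regularity assumption by density. Indeed $D(A(0))$ is dense in $H$ and $C^1([0,T];X_0)$ is dense in $L^1(0,T;X_0)$; along an approximating sequence the three estimates are uniform, the corresponding mild solutions converge in $C([0,T];H)$ by the first estimate applied to differences, and the continuity of the trace maps $\eta\mapsto\eta_x(\cdot,L)$ and $z\mapsto z(\cdot,1)$ — which is exactly what Proposition~\ref{pr:Kato} records — allows the trace estimate to pass to the limit, while the $L^2(0,T;X_1)$ bound survives by weak lower semicontinuity of that norm. The main obstacle is precisely this last point: the multiplier identities are meaningful only for solutions with $H^3$‑in‑space regularity, so one must check that the approximation simultaneously preserves the interior smoothing and the boundary traces; given the estimates of Proposition~\ref{pr:Kato}, however, this is routine.
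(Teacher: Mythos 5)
Your proposal is correct and follows essentially the same route as the paper: the trace estimate comes from the dissipation identity \eqref{eq:EnDiss3} augmented by the source term together with the negative definiteness of $\Phi_{\alpha,\beta}$, and the $L^2(0,T;X_1)$ bound from the Morawetz multipliers $x\omega$, $x\eta$ exactly as in Proposition~\ref{pr:Kato}. The only difference is presentational: you spell out the Duhamel representation through the evolution family and the density argument legitimizing the formal computations, which the paper leaves implicit, whereas the paper obtains the first estimate of \eqref{eq:APsou1} directly from the integrated energy inequality via Cauchy--Schwarz and Young rather than from a uniform bound on the evolution operators.
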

\begin{proof}
Analogously to the proof of Proposition \ref{pr:Kato}, it suffices to use~\eqref{eq:EnDiss3} and take into account that $\Phi_{\alpha,\beta}$ is a symmetric negative definite matrix. This implies that there exists $C>0$ such that
$$E'(t) +
		\eta_x^2(t,L) + z^2(t,1)
		\leq C \left\langle (\eta,\omega),\ (f_1,f_2)\right\rangle_{X_0}.$$
Integrating the previous inequality on $[0,s]$ for $0\leq s\leq T$, we get
	\begin{equation}\label{eq:APaux1}
		E(s) + \int_0^s \eta_x^2(t,L)\,dt + \int_0^s z^2(t,1)\,dt
		\leq
		C \left(
		\int_0^s \left\langle (\eta,\omega), (f_1,f_2)\right\rangle_{X_0} + E(0)
		\right).
	\end{equation}
	From Cauchy-Schwarz inequality, it follows that
	\begin{equation*}
	\begin{split}
		\left\lVert (\eta(s,\cdot), \omega(s,\cdot) ; z(s,\cdot) \right\rVert_{H}^2
		\leq&
		C \left(
		\lVert (\eta_0,\omega_0; z_0) \rVert_{H}^2\right.\\&
		\left.+
		\lVert (f_1,f_2) \rvert_{L^1(0,T; X_0)}\lVert (\eta,\omega) \rVert_{C([0,T],X_0)}
		\right),
		\end{split}
	\end{equation*}
and consequently, taking the $\sup$-norm for $s\in[0,T]$ and applying  Young's inequality, the estimate $\eqref{eq:APsou1}_{1}$ is obtained. Additionally, if we consider $s=T$ in~\eqref{eq:APaux1}, the estimate for the traces~$\eqref{eq:APsou1}_{2}$ is guaranteed.
	Finally, by using the same Morawetz multipliers  as in Proposition~\ref{pr:Kato},  we have
	\begin{equation*}
		\begin{aligned}
			\int_0^T\int_0^L x\left(f_1\omega(t,x) + f_2\eta(t,x) \right)dxdt
			\leq &  L\lVert(\eta,\omega, z)\rVert_{C([0,T],H)} \lVert(f_1,f_2)\rVert_{L^1(0,T,X_0)}\\
			\leq& C
			\left(
			\lVert(\eta_0,\omega_0; z_0) \rVert_{H}^2 + \lVert (f_1,f_2) \rVert_{L^1(0,T, X_0)}^2
			\right),
		\end{aligned}
	\end{equation*}
	proving $\eqref{eq:APsou1}_{3}$.
\end{proof}

\subsection{Nonlinear problem}
Using the theory of local well-posedness of nonlinear systems in~\cite{Kato1975}, it amounts to proving that the map $\Gamma \colon \mathcal{B} \to \mathcal{B}$ has a unique fixed-point in some closed ball $B(0, R)\subset \mathcal{B}$ where $\Gamma(\tilde\eta,\tilde\omega) = (\eta,\omega)$ and $(\eta,\omega)$ are the solution of the system \eqref{eq:KdV-KdV}. The next result ensures that the nonlinear terms can be considered as a source term of the linear equation~\eqref{eq:KK+Sou}. The proof can be found in~\cite{Capistrano2019}.

\begin{proposition}\label{pr:RC_nl}
Let $(\eta,\omega)\in L^2(0,T, [H^1(0,L)]^2)$, so $(\eta\omega)_x$, $\omega\omega_x\in L^1(0,T, X_0)$ and
$
(\eta,\omega) \in\mathcal{B} \mapsto ((\eta\omega)_x, (\omega\omega_x))\in L^1(0,T, X_0)
$
is continuous. In addition, the following estimate holds,
\begin{equation*}
\begin{split}
\int_0^T \left\lVert ((\eta_1\omega_1)_x-(\eta_2\omega_2)_x, \omega_1\omega_{1,x} - \omega_2\omega_{2,x}) \right\rVert_{X_0}dt
\leq&
KT^{\frac{1}{4}}\left(\lVert (\eta_1,\omega_1) \rVert_{\mathcal{B}} +\lVert (\eta_2,\omega_2)\rVert_{\mathcal{B}}\right)\\
&\times\lVert (\eta_1-\eta_2, \omega_1-\omega_2 ) \rVert_{\mathcal{B}}\,
\end{split}
\end{equation*}
for a constant $K>0$.
\end{proposition}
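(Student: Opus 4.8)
The plan is to reduce the whole statement to the bilinear structure of the two nonlinearities, combined with the one-dimensional embedding $H^1(0,L)\hookrightarrow L^\infty(0,L)$ and the interpolation inequality $\norm{u}_{L^\infty(0,L)}\le C_L\norm{u}_{L^2(0,L)}^{1/2}\norm{u}_{H^1(0,L)}^{1/2}$. For the first assertion I would simply write $(\eta\omega)_x=\eta_x\omega+\eta\,\omega_x$ and $\omega\omega_x=\tfrac12(\omega^2)_x$, bound each resulting product in $L^2(0,L)$ by the $L^2_x$-norm of the differentiated factor times the $L^\infty_x$-norm of the other, use $H^1\hookrightarrow L^\infty$, and integrate in time by Cauchy--Schwarz; this yields, e.g., $\int_0^T\norm{\eta_x\omega}_{L^2(0,L)}\,dt\le C_L\norm{\eta}_{L^2(0,T;H^1)}\norm{\omega}_{L^2(0,T;H^1)}<\infty$, so that $(\eta\omega)_x,\omega\omega_x\in L^1(0,T;X_0)$ for every $(\eta,\omega)\in L^2(0,T;[H^1(0,L)]^2)$.

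The core is the Lipschitz estimate. Writing $u:=\eta_1-\eta_2$ and $v:=\omega_1-\omega_2$, the plan is to expand
\[
(\eta_1\omega_1)_x-(\eta_2\omega_2)_x=u_x\omega_1+u\,\omega_{1,x}+\eta_{2,x}v+\eta_2 v_x,
\]
\[
\omega_1\omega_{1,x}-\omega_2\omega_{2,x}=\tfrac12 v_x(\omega_1+\omega_2)+\tfrac12 v(\omega_{1,x}+\omega_{2,x}),
\]
and to estimate each of these six summands in $L^2(0,L)$ by putting the $L^2_x$-norm on the factor carrying a spatial derivative and the $L^\infty_x$-norm on the one without, then applying the interpolation inequality to that $L^\infty_x$-factor. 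A generic term then takes the shape $\int_0^T\norm{\partial_x a}_{L^2_x}\norm{b}_{L^2_x}^{1/2}\norm{b}_{H^1_x}^{1/2}\,dt$, with $a$ one of the differences $u,v$ and $b$ one of $\eta_1,\omega_1,\eta_2,\omega_2$ (for the summand $u\,\omega_{1,x}$ it is $b$ that carries the derivative, but after interpolation the structure is identical with the roles of $a$ and $b$ swapped, $a$ still being the difference). I would then pull $\norm{b}_{C([0,T];L^2_x)}^{1/2}$ out of the time integral, apply Cauchy--Schwarz in $t$ to $\norm{\partial_x a}_{L^2_x}\cdot\norm{b}_{H^1_x}^{1/2}$ so as to bound the term by $C_L\norm{b}_{C([0,T];L^2_x)}^{1/2}\norm{\partial_x a}_{L^2(0,T;L^2_x)}\big(\int_0^T\norm{b}_{H^1_x}\,dt\big)^{1/2}$, and finally use $\int_0^T\norm{b}_{H^1_x}\,dt\le T^{1/2}\norm{b}_{L^2(0,T;H^1_x)}$. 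This produces exactly the gain $T^{1/4}$ together with a factor of the form $\norm{(\eta_i,\omega_i)}_{\mathcal{B}}\norm{(u,v)}_{\mathcal{B}}$; summing the six contributions and renaming constants gives the claimed inequality with $K=K(L)$. The asserted continuity of $(\eta,\omega)\mapsto((\eta\omega)_x,\omega\omega_x)$ from $\mathcal{B}$ into $L^1(0,T;X_0)$ then follows at once from this estimate and bilinearity, since $\mathcal{B}\hookrightarrow L^2(0,T;[H^1(0,L)]^2)$ and $\norm{\cdot}_{\mathcal{B}}$ controls both $\sup_{[0,T]}\norm{\cdot}_{X_0}$ and $\norm{\partial_x\cdot}_{L^2(0,T;X_0)}$.

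I do not expect a genuine obstacle: this is the standard interpolation bookkeeping for quadratic KdV-type nonlinearities on a bounded interval, and the proof of exactly this statement is already recorded in~\cite{Capistrano2019}. The only points that need some care are the bilinear rearrangement of the difference — keeping the spatial derivative on whichever factor makes each summand estimable while ensuring that the difference $(u,v)$ always appears, so that the right-hand side is honestly Lipschitz in $(\eta_1,\omega_1)-(\eta_2,\omega_2)$ — and tracking that the three successive Hölder/Cauchy--Schwarz steps in time combine to the exponent $1/4$ rather than some smaller power. Since $L<\sqrt{3}\pi$ is fixed throughout, the $L$-dependence of the Sobolev and interpolation constants is harmless and is absorbed into $K$, and $K$ may be taken uniform in $T$ on any bounded time interval.
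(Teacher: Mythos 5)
Your proposal is correct, and it is essentially the standard argument: the paper itself does not reproduce a proof of Proposition \ref{pr:RC_nl} but delegates it to \cite{Capistrano2019}, where exactly this interpolation--plus--Cauchy--Schwarz bookkeeping (the $L^\infty$--$L^2$ splitting, $\lVert b\rVert_{L^\infty_x}\leq C\lVert b\rVert_{L^2_x}^{1/2}\lVert b\rVert_{H^1_x}^{1/2}$, and the two successive Cauchy--Schwarz steps in time producing $T^{1/4}$) is carried out. Your exponent accounting and the bilinear rearrangement keeping the difference $(u,v)$ in every summand are both sound, so nothing further is needed.
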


Finally, we are in a position to present the existence of local solutions to \eqref{eq:KdV-KdV}.

\begin{theorem}\label{th:NLinSol}
Let $L, T> 0$ and consider $\alpha$ and $\beta$  real constants such that~\eqref{eq:CCond} is satisfied. For each initial data $(\eta_0, \omega_0; z_0) \in  H$ sufficiently small,  $\Gamma\colon \mathcal{B} \to\mathcal{B}$ defined by $\Gamma(\tilde\eta,\tilde\omega) = (\eta,\omega)$ is a contraction. Moreover, there exists a unique solution $(\eta, \omega) \in B(0, R) \subset \mathcal{B}$   of the Boussinesq KdV-KdV nonlinear system \eqref{eq:KdV-KdV}.
\end{theorem}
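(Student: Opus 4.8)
The plan is to apply the classical fixed-point scheme for quasilinear dispersive equations, treating the nonlinearities $(\eta\omega)_x$ and $\omega\omega_x$ as source terms fed into the linear theory already developed in Theorem~\ref{th:APsou}. Concretely, given $(\tilde\eta,\tilde\omega)\in\mathcal{B}$, define $\Gamma(\tilde\eta,\tilde\omega) = (\eta,\omega)$ where $(\eta,\omega;z)$ is the unique solution, furnished by Theorem~\ref{th:APsou}, of the linear system \eqref{eq:KK+Sou} with source $(f_1,f_2) = (-(\tilde\eta\tilde\omega)_x,\, -\tilde\omega\tilde\omega_x)$, the initial data $(\eta_0,\omega_0;z_0)$, and the boundary conditions of \eqref{eq:KdV-KdV}. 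By Proposition~\ref{pr:RC_nl} we have $(f_1,f_2)\in L^1(0,T;X_0)$, so $\Gamma$ is well-defined on $\mathcal{B}$, and moreover $(\eta,\omega)$ indeed lies in $\mathcal{B}$ thanks to the three estimates $\eqref{eq:APsou1}_1$ and $\eqref{eq:APsou1}_3$, which together control $\sup_{[0,T]}\|(\eta,\omega)\|_{X_0}$ and $\|(\eta_x,\omega_x)\|_{L^2(0,T;X_0)}$.

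Next I would show $\Gamma$ maps a closed ball $B(0,R)\subset\mathcal{B}$ into itself. Combining $\eqref{eq:APsou1}_1$ and $\eqref{eq:APsou1}_3$ gives a bound of the form
\[
\|\Gamma(\tilde\eta,\tilde\omega)\|_{\mathcal{B}} \leq C\big(\|(\eta_0,\omega_0;z_0)\|_H + \|(f_1,f_2)\|_{L^1(0,T;X_0)}\big).
\]
Applying Proposition~\ref{pr:RC_nl} with $(\eta_2,\omega_2) = 0$ yields $\|(f_1,f_2)\|_{L^1(0,T;X_0)} \leq K T^{1/4}\|(\tilde\eta,\tilde\omega)\|_{\mathcal{B}}^2$, so that $\|\Gamma(\tilde\eta,\tilde\omega)\|_{\mathcal{B}} \leq C\|(\eta_0,\omega_0;z_0)\|_H + CKT^{1/4}R^2$ whenever $\|(\tilde\eta,\tilde\omega)\|_{\mathcal{B}}\leq R$. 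Choosing $R := 2C\|(\eta_0,\omega_0;z_0)\|_H$ and then requiring $(\eta_0,\omega_0;z_0)$ small enough that $CKT^{1/4}R \leq \tfrac12$ makes the right-hand side $\leq R$, so $\Gamma(B(0,R))\subseteq B(0,R)$.

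For the contraction property I would take $(\tilde\eta_1,\tilde\omega_1),(\tilde\eta_2,\tilde\omega_2)\in B(0,R)$ and note that, by linearity of \eqref{eq:KK+Sou}, the difference $\Gamma(\tilde\eta_1,\tilde\omega_1)-\Gamma(\tilde\eta_2,\tilde\omega_2)$ solves \eqref{eq:KK+Sou} with zero initial data and source $(f_1^{(1)}-f_1^{(2)},\, f_2^{(1)}-f_2^{(2)})$. Estimate $\eqref{eq:APsou1}_1$ (with vanishing data) together with the Lipschitz estimate in Proposition~\ref{pr:RC_nl} gives
\[
\|\Gamma(\tilde\eta_1,\tilde\omega_1)-\Gamma(\tilde\eta_2,\tilde\omega_2)\|_{\mathcal{B}} \leq C K T^{1/4}\big(\|(\tilde\eta_1,\tilde\omega_1)\|_{\mathcal{B}}+\|(\tilde\eta_2,\tilde\omega_2)\|_{\mathcal{B}}\big)\|(\tilde\eta_1-\tilde\eta_2,\tilde\omega_1-\tilde\omega_2)\|_{\mathcal{B}} \leq 2CKT^{1/4}R\,\|(\tilde\eta_1-\tilde\eta_2,\tilde\omega_1-\tilde\omega_2)\|_{\mathcal{B}}.
\]
The smallness condition already imposed forces $2CKT^{1/4}R\leq 1$, but to get a strict contraction constant $<1$ one shrinks the data (or $R$) slightly further, e.g. so that $2CKT^{1/4}R\leq \tfrac12$. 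Then the Banach fixed-point theorem yields a unique fixed point $(\eta,\omega)\in B(0,R)$, which is precisely the desired local solution of \eqref{eq:KdV-KdV}; uniqueness in the whole ball is part of the fixed-point statement.

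The main subtlety — and the place where one must be slightly careful rather than purely mechanical — is bookkeeping the interplay between the two smallness requirements (self-mapping of the ball and strict contraction) and the role of $T$: the factor $T^{1/4}$ means that for \emph{fixed} small data one could also close the argument by shrinking $T$, but here the statement fixes $L,T>0$ arbitrarily and trades this off against smallness of the initial data, so $R$ must be taken proportional to $\|(\eta_0,\omega_0;z_0)\|_H$ and the condition is genuinely a smallness-of-data condition with a threshold depending on $L$, $T$, $\alpha$, $\beta$, $K$, $C$. Everything else is a direct citation of Theorem~\ref{th:APsou} and Proposition~\ref{pr:RC_nl}; no new estimate is needed.
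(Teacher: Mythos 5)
Your proposal is correct and follows essentially the same route as the paper: $\Gamma$ is defined through Theorem~\ref{th:APsou} with the nonlinearities as $L^1(0,T;X_0)$ sources, the bilinear estimate of Proposition~\ref{pr:RC_nl} gives the self-mapping and Lipschitz bounds on a ball of radius $R=2C\lVert(\eta_0,\omega_0;z_0)\rVert_H$, and Banach's fixed-point theorem concludes. If anything, your bookkeeping of the smallness conditions (keeping $T$ fixed and shrinking the data so that $2CKT^{1/4}R<1$) is cleaner and more faithful to the statement than the paper's own choice, which simultaneously adjusts $T$ and imposes the somewhat opaque condition $C<2KT^{\frac14}$.
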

\begin{proof}
It follows from Theorem~\ref{th:APsou} that the map $\Gamma$ is well defined. Using Proposition~\ref{pr:RC_nl} and the \emph{a priori estimates} we obtain
\begin{equation*}
\lVert \Gamma(\tilde\eta,\tilde\omega)\rVert_{\mathcal{B}}
= \lVert (\eta,\omega) \rVert_{\mathcal{B}}
\leq C\left( \lVert(\eta_0,\omega_0, z_0)\rVert_{H} + \lVert (\tilde\eta,\tilde\omega)\rVert_{\mathcal{B}}^2 \right)
\end{equation*}
and
\begin{equation*}
\left\lVert \Gamma(\tilde\eta_1,\tilde\omega_1) - \Gamma(\tilde\eta_2,\tilde\omega_2) \right\rVert_{\mathcal{B}}
\leq
K T^{\frac{1}{4}} \left( \lVert (\tilde\eta_1,\tilde\omega_1) \rVert_{\mathcal{B}} + \lVert (\tilde\eta_2,\tilde\omega_2)\rVert_{\mathcal{B}}\right) \lVert (\tilde\eta_1-\tilde\eta_2, \tilde\omega_1-\tilde\omega_2 ) \rVert_{\mathcal{B}}.
\end{equation*}
Now, we restrict $\Gamma$ to the closed ball $\lbrace(\tilde\eta,\tilde\omega)\in\mathcal{B}: \lVert (\tilde\eta,\tilde\omega)\rVert_{\mathcal{B}}\leq R\rbrace$, with $R > 0$ to be determined later. Then,
$
\lVert \Gamma(\tilde\eta,\tilde\omega)\rVert_{\mathcal{B}}
\leq C\left( \lVert(\eta_0,\omega_0, z_0)\rVert_{H} +R^2 \right)$
and
$$
\left\lVert \Gamma(\tilde\eta_1,\tilde\omega_1) - \Gamma(\tilde\eta_2,\tilde\omega_2) \right\rVert_{\mathcal{B}}
\leq
2RK T^{\frac{1}{4}} \lVert (\tilde\eta_1-\tilde\eta_2, \tilde\omega_1-\tilde\omega_2 ) \rVert_{\mathcal{B}}.
$$
Next, we pick $R = 2C\lVert(\eta_0,\omega_0, z_0)\rVert_{H} $ and $T>0$ such that $2KT^{\frac{1}{4}} R < 1$, with $C < 2KT^{\frac{1}{4}}$. This leads to claim that $$\lVert \Gamma(\tilde\eta,\tilde\omega)\rVert_{\mathcal{B}}\leq R$$ and  $$\left\lVert \Gamma(\tilde\eta_1,\tilde\omega_1) - \Gamma(\tilde\eta_2,\tilde\omega_2) \right\rVert_{\mathcal{B}} < C_1  \lVert (\tilde\eta_1-\tilde\eta_2, \tilde\omega_1-\tilde\omega_2 ) \rVert_{\mathcal{B}},$$ with $C_1<1$. Lastly, the result is an immediate consequence of the Banach fixed point theorem.
\end{proof}

\begin{remark}We point out that the solutions of the system \eqref{eq:KdV-KdV} obtained in Theorem \ref{th:NLinSol} are only local.  Due to a lack of a priori $L^2$-estimate, the issue of the global existence of solutions is difficult to address in the energy space for the nonlinear system with a delay term.
\end{remark}

\section{Linear stabilization result}\label{sec3}
Since the $L^2$ a priori estimate is valid for the linear system, the solutions of the linearized system associated with \eqref{eq:KdV-KdV} are globally well-posed. Thereby, we are ready to prove the main result of this work.

\subsection{Proof of Theorem~\ref{th:Lyapunov0}} Consider the following Lyapunov functional $$V(t) = E(t) + \mu_1V_1(t) + \mu_2 V_2(t),$$ where $\mu_1,\mu_2 \in \mathbb{R}^+$ will be chosen later. Here, $E(t)$ is the total energy given by \eqref{eq:En}, while
$$
V_1(t) =\frac{1}{2} \int_0^L x\eta(t,x)\omega(t,x)\,dx $$
and $$ V_2(t) = \frac{\lvert\beta\rvert}{2} \tau(t) \int_0^1 (1-\rho) \eta_x^2(t-\tau(t)\rho ,L)\,d\rho.$$
Observe that,
\begin{equation*}
	(1-\max\lbrace \mu_1 L, \mu_2\rbrace)E(t) \leq V(t) = E(t) + \mu_1 V_1(t) + \mu_2 V_2(t) \leq (1+\max\lbrace  2\mu_1L, \mu_2 \rbrace) E(t).
\end{equation*} 	
The Young's inequality yields that
\begin{equation}
	\left\lvert\mu_1\int_0^L x\eta\omega \,dx \right\rvert
	\leq \mu_1 L\int_0^L \left\lvert\eta\omega\right\rvert \,dx
	\leq \frac{\mu_1 L}{2} \int_0^L\left( \eta^2 + \omega^2\right)dx.
\end{equation}
Moreover,
\begin{equation}
	\begin{aligned}
		\left\lvert  \mu_1\int_0^L x\eta\omega \,dx\right. &   \left.  +\ \mu_2\cdot \frac{\lvert\beta\rvert}{2} \tau(t) \int_0^1 (1-\rho) \eta_x^2(t-\tau(t)\rho ,L)\,d\rho \right\rvert\\
		\leq \ &
		\left\lvert\mu_1\int_0^L x\eta\omega \,dx \right\rvert + \left\lvert \mu_2\cdot \frac{\lvert\beta\rvert}{2} \tau(t) \int_0^1 (1-\rho) \eta_x^2(t-\tau(t)\rho ,L)\,d\rho \right\rvert \\
		\leq \ &
		\frac{\mu_1 L}{2} \int_0^L \left(\eta^2 + \omega^2\right)dx +  \mu_2\cdot \frac{\lvert\beta\rvert}{2} \tau(t) \int_0^1 \eta_x^2(t-\tau(t)\rho ,L)\,d\rho \\
		\leq \ &\max\lbrace \mu_1 L,\mu_2\rbrace
		\left(
		\frac{1}{2} \int_0^L  \left(\eta^2 + \omega^2\right)dx + \frac{\lvert \beta\rvert }{2}\tau(t) \int_0^1 \eta_x^2(t-\tau(t)\rho , L)\,d\rho.			\right)\\
		= \ & \max\lbrace \mu_1 L,\mu_2\rbrace E(t),
	\end{aligned}
\end{equation}
and, consequently,
\begin{equation}\label{eq:EquivEV}
	(1-\max\lbrace \mu_1 L, \mu_2\rbrace) E(t) \leq V(t) \leq (1+\max\lbrace\mu_1 L,\mu_2\rbrace)E(t),
\end{equation} 	
since $\mu_1 L<1$ by hypothesis.

To obtain the derivative of $V_1$, we have
\begin{equation*}
V_1'(t)=\dfrac{d}{dt}\left(\displaystyle\int_{0}^{L}x\eta \omega dx\right)=\displaystyle\int_{0}^{L} x\eta_t\omega dx+\int_{0}^{L}x\eta \omega_t dx=I_1+I_2.
\end{equation*}
Let us analyze each term. For $I_1$, using the boundary condition, we get that
\begin{align*}
	\int_{0}^{L}x\eta_t\omega dx
	=& -\int_{0}^{L}x\omega_x\omega dx-\int_{0}^{L}x\omega_{xxx}\omega dx\\
	=& \frac{1}{2} \int_{0}^{L}\omega^2dx-\int_{0}^{L}\omega_{x}^{2}dx+\int_{0}^{L}\frac{x}{2}(\omega_x^2)_xdx\\
	=& \frac{1}{2} \int_{0}^{L}\omega^2dx-\int_{0}^{L}\omega_{x}^{2}dx+ \left(\frac{x}{2}(\omega_{x}^{2})\right)\big|_{0}^{L}-\frac{1}{2}\int_{0}^{L}\omega_{x}^{2}dx\\
	=& \frac{1}{2}\int_{0}^{L} \omega^2 dx-\int_{0}^{L} \omega_{x}^{2}dx+\frac{L}{2}(-\alpha \eta_x(L)+\beta \eta_x(t-\tau(t),L))^2-\frac{1}{2}\int_{0}^{L}\omega_{x}^{2}dx.
\end{align*}
Therefore,
\begin{equation}\label{I_1}
	\begin{split}
		\int_{0}^{L}x\eta_t\omega dx=& \frac{1}{2} \int_{0}^{L}\omega^2dx-\frac{3}{2}\int_{0}^{L}\omega_{x}^{2}dx\\
		&+\frac{L}{2}(-\alpha \eta_x(L)+\beta \eta_x(t-\tau(t),L))^2.
	\end{split}
\end{equation}
For $I_2$, thanks to the boundary condition, we have that
\begin{equation}\label{I_2}
\begin{split}
	\int_{0}^{L}x \eta \omega_t dx
	=& -\int_{0}^{L}x\eta \eta_x dx-\int_{0}^{L} x \eta \eta_{xxx} dx\\
	=& \frac{1}{2}\int_{0}^{L}\eta^2dx-\int_{0}^{L}\eta_{x}^{2}dx+\left(\frac{x}{2} \eta_{x}^{2}\right)\big|_{0}^{L}-\frac{1}{2}\int_{0}^{L}\eta_{x}^{2}dx\\
	=&\frac{1}{2}\int_{0}^{L}\eta^2dx-\int_{0}^{L}\eta_{x}^{2}dx-\frac{1}{2}\int_{0}^{L}\eta_{x}^{2}dx+\frac{L}{2} \eta_{x}^{2}(L)\\
	=&\frac{1}{2}\int_{0}^{L}\eta^2dx-\frac{3}{2}\int_{0}^{L}\eta_{x}^{2}dx-\int_{0}^{L}x\eta \omega \omega_{x} dx+\frac{L}{2} \eta_{x}^{2}(L).
\end{split}
\end{equation}
Adding the identities \eqref{I_1} and \eqref{I_2} we obtain the following identity
\begin{align*}
	V_1'(t)=& \frac{1}{2} \int_{0}^{L}\omega^2dx-\frac{3}{2}\int_{0}^{L}\omega_{x}^{2}dx+\frac{L}{2}(-\alpha \eta_x(L)+\beta \eta_x(t-\tau(t),L))^2\\
	&+\frac{1}{2}\int_{0}^{L}\eta^2dx-\frac{3}{2}\int_{0}^{L}\eta_{x}^{2}dx+\frac{L}{2} \eta_{x}^{2}(L).
\end{align*}
Hence,
\begin{align*}
	V_1'(t)=&\frac{L}{2} \begin{pmatrix}
		\eta_x(t,L) \\ \eta_x(t-\tau(t),L)
	\end{pmatrix}^T
	\begin{pmatrix}
		\alpha^2+1 & -\alpha\beta \\
		-\alpha\beta & \beta^2
	\end{pmatrix}
	\begin{pmatrix}
		\eta_x(t,L) \\ \eta_x(t-\tau(t),L)
	\end{pmatrix}\\
	&+\frac{1}{2} \int_{0}^{L}\left(\omega^2+\eta^2\right)dx-\frac{3}{2}\int_{0}^{L}\left(\omega_{x}^{2}+\eta_{x}^{2}\right)dx.
\end{align*}

Let
\begin{equation*}
	V_2(t) = \frac{\lvert\beta\rvert}{2} \tau(t) \int_0^1 (1-\rho) \eta_x^2(t-\tau(t)\rho ,L) d\rho.	
\end{equation*}
Remembering that $$-\tau(t)\partial_t\eta_x(t-\tau(t)\rho,L) = (1-\dot\tau(t)\rho)\partial_{\rho}\eta_x(t-\tau(t)\rho,L)$$ we have, by integration by parts, that
\begin{equation*}
	\begin{aligned}
		V_2'(t) =\ & \frac{\lvert\beta\rvert}{2} \dot\tau(t)\int_0^1 (1-\rho) \eta_x^2(t-\tau(t)\rho,L) d\rho \\
		&+\lvert\beta\rvert \tau(t)\int_0^1 (1-\rho)\eta_x(t-\tau(t-\tau(t)\rho,L)\partial_t\eta_x(t-\tau(t)\rho,L) d\rho \\
		=\ &\frac{\lvert\beta\rvert}{2} \dot\tau(t)\int_0^1 (1-\rho) \eta_x^2(t-\tau(t)\rho,L)d\rho \\
		&+\lvert\beta\rvert\int_0^1 (\rho-1) (1-\dot\tau(t)\rho) \eta_x(t-\tau(t)\rho,L)\partial_\rho \eta_x(t-\tau(t)\rho,L) d\rho \\
		=\ &\frac{\lvert\beta\rvert}{2}\dot\tau(t)\int_0^1 (1-\rho) \eta_x^2(t-\tau(t)\rho,L) d\rho
		+\frac{\lvert\beta\rvert}{2} \int_0^1 (\rho-1)(1-\dot\tau(t)\rho)\left(\eta_x^2(t-\tau(t)\rho,L)\right)_\rho d\rho \\
		=\ &\frac{\lvert\beta\rvert}{2}\dot\tau(t)\int_0^1 (1-\rho) \eta_x^2(t-\tau(t)\rho,L) d\rho +\frac{\lvert\beta\rvert}{2} \int_0^1 \left[(1-\rho)(1-\dot\tau(t)\rho)\right]_{\rho} \eta_x^2(t-\tau(t)\rho,L) d\rho \\
		& +\frac{\lvert\beta\rvert}{2} \left[ (\rho-1)(1-\dot\tau(t)\rho)\eta_x^2(t-\tau(t)\rho,L)\right]_{\rho=0}^{\rho=1} \\
		=\ &-\frac{\lvert\beta\rvert}{2}\int_0^1 (1-\dot\tau(t)\rho) \eta_x^2(t-\tau(t)\rho,L) d\rho + \frac{\lvert\beta\rvert}{2}\eta_x^2(t,L),
	\end{aligned}
\end{equation*}
that is,
\begin{equation}\label{V_2'}
	V_2'(t)  = -\frac{\lvert\beta\rvert}{2}\int_0^1(1-\dot\tau(t)\rho)\eta_x^2(t-\tau(t)\rho,L) d\rho + \frac{\lvert\beta\rvert}{2}\eta_x^2(t,L).
\end{equation}
Since the energy of our problem is given by
\begin{equation*}
	E(t) = \frac{1}{2} \int_0^L\left(\eta^2+\omega^2\right) dx
	+ \frac{\lvert \beta\rvert }{2}\tau(t) \int_0^1 \eta_x^2(t-\tau(t)\rho , L) d\rho,
\end{equation*}
yields that
\begin{equation*}
	E'(t)
	=\frac{1}{2} \begin{pmatrix}
		\eta_x(t,L) \\ \eta_x(t-\tau(t),L)
	\end{pmatrix}^T
	\Phi_{\alpha,\beta}\begin{pmatrix}
		\eta_x(t,L) \\ \eta_x(t-\tau(t),L)
	\end{pmatrix},
\end{equation*}
with
\begin{equation*}
	\Phi_{\alpha,\beta} = \begin{pmatrix}
		-2\alpha + \lvert\beta\rvert & \beta\\
		\beta & \lvert\beta\rvert (d -1 )
	\end{pmatrix}.
\end{equation*}
Let
\begin{equation*}
	V(t) = E(t) + \mu_1 V_1(t) + \mu_2 V_2(t).
\end{equation*}
Then,
\begin{align*}
	V'(t)+\lambda V(t)=&E'(t)+\mu_1 V_1'(t)+\mu_2 V_2'(t)+\lambda E(t)+\lambda \mu_1 V_1(t)+\lambda \mu_2 V_2(t)\\
	=& \frac{1}{2} \begin{pmatrix}
			\eta_x(t,L) \\ \eta_x(t-\tau(t),L)
		\end{pmatrix}^T
		\Phi_{\alpha,\beta}\begin{pmatrix}
			\eta_x(t,L) \\ \eta_x(t-\tau(t),L)
	\end{pmatrix}
	\\
	&+\frac{\mu_1L}{2} \begin{pmatrix}
			\eta_x(t,L) \\ \eta_x(t-\tau(t),L)
		\end{pmatrix}^T
		\begin{pmatrix}
			\alpha^2+1 & -\alpha\beta \\
			-\alpha\beta & \beta^2
		\end{pmatrix}
		\begin{pmatrix}
			\eta_x(t,L) \\ \eta_x(t-\tau(t),L)
	\end{pmatrix}\\
	&+\frac{\mu_1}{2} \int_{0}^{L}\left(\omega^2+\eta^2\right)dx-\frac{3\mu_1}{2}\int_{0}^{L}\left(\omega_{x}^{2}+\eta_{x}^{2}\right)dx\\
	&-\mu_2\frac{\lvert\beta\rvert}{2}\int_0^1(1-\dot\tau(t)\rho)\eta_x^2(t-\tau(t)\rho,L) d\rho +\frac{\mu_2\lvert\beta\rvert}{2}\eta_x^2(t,L)\\
	&+\frac{\lambda}{2} \int_0^L\left(\eta^2+\omega^2\right)dx
	+ \frac{\lambda \lvert \beta\rvert }{2}\tau(t) \int_0^1 \eta_x^2(t-\tau(t)\rho , L) d\rho\\
	& +\mu_1\lambda\int_{0}^{L} x\eta \omega dx+\frac{\mu_2\lvert\beta\rvert{ \lambda}}{2} \tau(t) \int_0^1 (1-\rho) \eta_x^2(t-\tau(t)\rho ,L) d\rho.
\end{align*}
Therefore,
\begin{align*}
	V'(t)+\lambda V(t)=& \frac{1}{2} \left\langle \Psi_{\mu_1,\mu_2} (\eta_x(t,L), \eta_x(t-\tau(t),L)), (\eta_x(t,L), \eta_x(t-\tau(t),L)) \right\rangle \\
	&+\frac{\mu_1}{2} \int_{0}^{L}\left(\omega^2+\eta^2\right)dx-\frac{3\mu_1}{2}\int_{0}^{L}\left(\omega_{x}^{2}+\eta_{x}^{2}\right)dx\\&+\frac{\lambda}{2} \int_0^L\left(\eta^2+\omega^2\right) dx+\mu_1\lambda\int_{0}^{L} x\eta \omega dx\\
	&-\mu_2\frac{\lvert\beta\rvert}{2}\int_0^1(1-\dot\tau(t)\rho)\eta_x^2(t-\tau(t)\rho,L) d\rho	+ \frac{\lambda \lvert \beta\rvert }{2}\tau(t) \int_0^1 \eta_x^2(t-\tau(t)\rho , L) d\rho\\
	&+\frac{\mu_2\lvert\beta\rvert{ \lambda}}{2} \tau(t) \int_0^1 (1-\rho) \eta_x^2(t-\tau(t)\rho ,L) d\rho\\
	=& M+S_1+S_2.
\end{align*}
Here, the terms $\Psi_{\mu_1,\mu_2}$, $M$, $S_1$ and $S_2$ are given by
\begin{equation}\label{psi}
	\Psi_{\mu_1,\mu_2} =
	\Phi_{\alpha,\beta}
	+ L\mu_1 \begin{pmatrix}
		\alpha^2+1 & -\alpha\beta \\
		- \alpha\beta & \beta^2
	\end{pmatrix}
	+ \lvert\beta\rvert\mu_2 \begin{pmatrix}
		1 & 0 \\ 0 & 0
	\end{pmatrix},
\end{equation}

\begin{equation*}
	\begin{aligned}
		M=&	\frac{1}{2} \left\langle \Psi_{\mu_1,\mu_2} (\eta_x(t,L), \eta_x(t-\tau(t),L)), (\eta_x(t,L), \eta_x(t-\tau(t),L)) \right\rangle,\\
		S_1=& \frac{\mu_1}{2} \int_{0}^{L}\left(\omega^2+\eta^2\right)dx-\frac{3\mu_1}{2}\int_{0}^{L}\left(\omega_{x}^{2}+\eta_{x}^{2}\right)dx+\frac{\lambda}{2} \int_0^L\left(\eta^2+\omega^2\right) dx+\mu_1\lambda\int_{0}^{L} x\eta \omega dx,\\
			\end{aligned}
			\end{equation*}
			and
			\begin{equation*}
				\begin{aligned}
		S_2=&-\mu_2\frac{\lvert\beta\rvert}{2}\int_0^1(1-\dot\tau(t)\rho)\eta_x^2(t-\tau(t)\rho,L) d\rho + \frac{\lambda \lvert \beta\rvert }{2}\tau(t) \int_0^1 \eta_x^2(t-\tau(t)\rho , L) d\rho\\
		& +\frac{\mu_2\lvert\beta\rvert{ \lambda}}{2} \tau(t) \int_0^1 (1-\rho) \eta_x^2(t-\tau(t)\rho ,L) d\rho,
	\end{aligned}
\end{equation*}
respectively.

Now we need to prove that $V'(t)+\lambda V(t)\leq0$. To do that, let us analyze each term above.

\vspace{0.2cm}

\noindent\textbf{Estimate for $M$:} From the properties of $\Phi_{\alpha,\beta}$ and the continuity of the trace and determinant functions, we can ensure that 
$\Psi_{\mu_1,\mu_2}$ is negative definite. Thus,
$$
	M\leq0.
$$

\vspace{0.2cm}

\noindent\textbf{Estimate for $S_1$:} Observe that using Poincar\'e inequality, we get that
\begin{equation*}
	\begin{aligned}
		S_1
		\leq & \frac{1}{2}\left(\lambda(1+\mu_1L)+\mu_1 \right)\int_{0}^{L}\left( \omega^2+\eta^2 \right)dx -\frac{3\mu_1}{2}\int_{0}^{L}\left(\omega_{x}^{2}+\eta_{x}^{2}\right)dx\\
		\leq& \left[\frac{L^2}{2\pi^2}\left(\lambda(1+\mu_1L)+\mu_1 \right)-\frac{3\mu_1}{2}\right]\int_{0}^{L}\left(\omega_{x}^{2}+\eta_{x}^{2}\right)dx.
	\end{aligned}
\end{equation*}
Thus,
\begin{equation*}
	S_1<0,
\end{equation*}
if
\begin{equation*}
	\lambda < \frac{\mu_1(3\pi^2-L^2)}{L^2(1+\mu_1)}.
\end{equation*}

\vspace{0.2cm}

\noindent\textbf{Estimate for $S_2$:} Note that
\begin{equation*}
	\begin{aligned}
		S_2\leq& -\frac{\mu_2\lvert \beta \rvert }{2}(1-d)\int_0^1\eta_x^2(t-\tau(t)\rho,L) d\rho+\frac{\lambda \lvert \beta\rvert  M}{2}\int_0^1 \eta_x^2(t-\tau(t)\rho , L) d\rho\\
		&+\frac{\lambda\mu_2\lvert\beta\rvert M}{2} \int_0^1 \eta_x^2(t-\tau(t)\rho ,L) d\rho\\
		\leq& \frac{\lvert \beta \rvert}{2} \left(\lambda  M+\lambda\mu_2 M -\mu_2(1-d)\right)\int_0^1\eta_x^2(t-\tau(t)\rho,L) d\rho.
	\end{aligned}
\end{equation*}
Then, choosing
\begin{equation*}
	\lambda < \frac{\mu_2(1-d)}{M(1+\mu_2)}
\end{equation*}
we have that
$$
	\frac{\lvert \beta \rvert}{2} \left(\lambda  M+\lambda\mu_2 M -\mu_2(1-d)\right)<0.
$$

\color{black}
Therefore, for $\zeta>0$ and $\lambda> 0$ fulfilling \eqref{eq:r} and \eqref{eq:lambda}, respectively, we have
$$
\frac{d}{dt} V(t) + \lambda V(t) \leq 0 \iff E(t) \leq  \zeta E(0)e^{-\lambda t}, \quad \forall t\geq 0,
$$
since $V(t)$ satisfies \eqref{eq:EquivEV}. This achieves the proof of the theorem.\qed

\subsection{Optimization of the decay rate}
We can optimize the value of $\lambda$ in Theorem \ref{th:Lyapunov0} to obtain the best decay rate for the linear system associated with \eqref{eq:KdV-KdV} in the following way:
\begin{proposition}\label{llllll} Choosing the constant $\mu_1$  as follows
\begin{equation}\label{eq:muOPT}
	\mu_1 \in \left[0,\frac{(2\alpha-\lvert\beta\rvert)(1-d)-\lvert\beta\rvert}{L(1-d)(1+\alpha^2)} \right),
\end{equation}
we claim that $\lambda$ has the largest possible value.
\end{proposition}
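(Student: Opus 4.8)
The plan is to read Theorem~\ref{th:Lyapunov0} as a constrained optimization. Abbreviate $a(\mu_1):=\frac{\mu_1(3\pi^2-L^2)}{L^2(1+\mu_1)}$ and $b(\mu_2):=\frac{\mu_2(1-d)}{M(1+\mu_2)}$, so that~\eqref{eq:lambda} allows precisely $\lambda\le\min\{a(\mu_1),b(\mu_2)\}$, while the pair $(\mu_1,\mu_2)$ must satisfy $0<\mu_1 L<1$, $\mu_2>0$ and the negative-definiteness of $\Psi_{\mu_1,\mu_2}$ (which is what yields $M\le 0$ in the proof of Theorem~\ref{th:Lyapunov0}). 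Since $s\mapsto s/(1+s)$ is strictly increasing and $3\pi^2-L^2>0$ because $L<\sqrt{3}\pi$, both $a$ and $b$ are strictly increasing in their argument, so the largest admissible $\lambda$ is obtained by pushing $\mu_1$ and $\mu_2$ to the boundary of the admissible region. First I would record this monotonicity, together with the observation that in~\eqref{psi} the parameter $\mu_2$ enters $\Psi_{\mu_1,\mu_2}$ only through the $(1,1)$ entry, additively with the positive coefficient $\lvert\beta\rvert$; hence raising $\mu_2$ can only worsen negative-definiteness, and the question reduces to: how large may $\mu_1$ be so that \emph{some} $\mu_2>0$ still keeps $\Psi_{\mu_1,\mu_2}$ negative definite?

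To answer this I would use that a real symmetric $2\times2$ matrix is negative definite iff both diagonal entries are negative and its determinant is positive. For fixed $\mu_1$ the admissible $\mu_2$ then form an interval $(0,\bar\mu_2(\mu_1))$ which is nonempty precisely when $\Psi_{\mu_1,0}$ is already negative definite. Writing out, from~\eqref{psi} and the explicit form of $\Phi_{\alpha,\beta}$, the three scalar conditions — the $(1,1)$ entry $-2\alpha+\lvert\beta\rvert+L\mu_1(\alpha^2+1)<0$, the $(2,2)$ entry $-\lvert\beta\rvert(1-d)+L\mu_1\beta^2<0$, and a determinant inequality which is a quadratic in $\mu_1$ with positive constant term and positive leading coefficient by~\eqref{eq:CCond} — one then checks, again using~\eqref{eq:CCond}, that the effective restriction is exactly $\mu_1<\frac{(2\alpha-\lvert\beta\rvert)(1-d)-\lvert\beta\rvert}{L(1-d)(1+\alpha^2)}$, that is, the interval~\eqref{eq:muOPT}. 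A short additional check with~\eqref{eq:CCond} shows this right endpoint is strictly less than $1/L$, so the constraint $\mu_1 L<1$ of Theorem~\ref{th:Lyapunov0} is automatically satisfied on~\eqref{eq:muOPT} and need not be imposed separately.

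With the admissible region described, the conclusion is immediate: for each $\mu_1$ in~\eqref{eq:muOPT} one may pick an admissible $\mu_2$ and take $\lambda$ up to $\min\{a(\mu_1),b(\mu_2)\}$, and since $a$ is strictly increasing this value grows as $\mu_1$ approaches the right endpoint of~\eqref{eq:muOPT}; moving $\mu_1$ beyond~\eqref{eq:muOPT} either makes the triplet inadmissible outright or shrinks the available range of $\mu_2$ so much that $b(\bar\mu_2(\mu_1))$, not $a(\mu_1)$, becomes the binding term in the minimum and starts to decrease, so nothing is gained. Hence~\eqref{eq:muOPT} is exactly the window in which $\mu_1$ should be chosen for $\lambda$ to attain its largest value, which is the claim. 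I expect the main obstacle to be the algebra in the middle paragraph — verifying that, under~\eqref{eq:CCond}, the $(2,2)$-entry inequality and the determinant inequality collapse to precisely the endpoint in~\eqref{eq:muOPT} rather than to a larger one — together with isolating the degenerate case $\beta=0$, in which the delayed feedback and the transport variable $z$ drop out and the statement reduces to the purely damped KdV--KdV system.
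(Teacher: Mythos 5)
Your setup is fine up to the description of the admissible set, but the final optimization step contains a genuine error. You correctly observe that for fixed $\mu_1$ one should push $\mu_2$ up to the largest admissible value $\bar\mu_2(\mu_1)$, and that $\mu_2$ enters \eqref{psi} only through the $(1,1)$ entry with the positive coefficient $\lvert\beta\rvert$. But precisely because of this, $\bar\mu_2(\mu_1)$ is a \emph{decreasing} function of $\mu_1$ which vanishes at the right endpoint of \eqref{eq:muOPT} (in the paper's choice, $\bar\mu_2(\mu_1)=\frac{(2\alpha-\lvert\beta\rvert)(1-d)-\lvert\beta\rvert-L(1-d)(1+\alpha^2)\mu_1}{\lvert\beta\rvert(1-d)}$). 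Hence the effective second bound $g(\mu_1):=b\bigl(\bar\mu_2(\mu_1)\bigr)$ decreases on \eqref{eq:muOPT} and tends to $0$ at its right endpoint, so $\min\{a(\mu_1),g(\mu_1)\}$ does \emph{not} ``grow as $\mu_1$ approaches the right endpoint'': it tends to $0$ at both ends of the interval ($a(0)=0$, $g=0$ at the right end). The trade-off you relegate to ``beyond \eqref{eq:muOPT}'' in fact takes place inside the interval, and your reduction of the problem to ``how large may $\mu_1$ be so that some $\mu_2>0$ remains admissible'' answers a different question: pushing $\mu_1$ to that threshold forces $b(\mu_2)\to0$ and destroys the rate, so the largest $\lambda$ is not obtained near the endpoint of the window.

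What the paper proves, and what your argument is missing, is exactly this balancing: writing $f(\mu_1)$ for the first bound and $g(\mu_1)=b(\bar\mu_2(\mu_1))$ for the second, one checks $f'>0$ and $g'<0$ on \eqref{eq:muOPT}, with $f(0)=0<g(0)$ and $g=0<f$ at the right endpoint; therefore the two graphs cross at a unique interior point, and the maximum of $\lambda(\mu_1)=\min\{f(\mu_1),g(\mu_1)\}$ is attained precisely at that equalization point $f(\mu_1)=g(\mu_1)$, not at the boundary of the window. Two smaller points: your assertion that the three scalar negative-definiteness conditions ``collapse to precisely the endpoint in \eqref{eq:muOPT}'' would itself require proof --- with $\mu_2=0$ the determinant is a quadratic in $\mu_1$ whose positivity set need not coincide with \eqref{eq:muOPT}; the paper sidesteps this by choosing $\mu_2=\bar\mu_2(\mu_1)$, which makes the determinant manifestly positive under \eqref{eq:CCond}. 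Also, the paper's $f$ carries $1+\mu_1L$ in the denominator, consistent with the estimate of $S_1$ in the proof of Theorem \ref{th:Lyapunov0}, rather than the $1+\mu_1$ you copied from \eqref{eq:lambda}; this does not affect the monotonicity but matters for the value of the optimal rate.
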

\begin{proof}
Define the functions $f$ and $g$ $\colon \left[
	0, \frac{(2\alpha-\lvert\beta\rvert)(1-d)-\lvert\beta\rvert}{L(1-d)(1+\alpha^2)}
	\right]\to \mathbb{R}$
by
$$		f(\mu_1) = \frac{\mu_1 \left(3\pi^2-L^2\right)}{L^2(1+\mu_1L)},$$
and
$$	g(\mu_1) = \frac{(2\alpha-|\beta|)(1-d)-|\beta|-L(1-d)(1+\alpha^2)\mu_1}{M(2\alpha(1-d)-|\beta|-L(1-d)(1+\alpha^2)\mu_1)}(1-d).$$
Then, let $\lambda(\mu_1) = \min\lbrace f(\mu_1), g(\mu_1)\rbrace$ we have the following claims.

\begin{claim}\label{RR1} The function $f$ is increasing in the interval $\left[0,\frac{(2\alpha-\lvert\beta\rvert)(1-d)-\lvert\beta\rvert}{L(1-d)(1+\alpha^2)}\right)$ while the function $g$ is decreasing in the same interval.
\end{claim}

In fact, note that if
$$
	f(\mu_1)=\frac{\left(3 \pi^2-L^2\right)}{ L^3}\left(1-\frac{1}{1+ \mu_1 L}\right)\implies f^{\prime}(\mu_1)=\frac{\left(3 \pi^2-L^2\right)}{L^2(1+\mu_1 L)^2}>0.$$ In particular, $f^{\prime}(\mu_1)>0$ for $\mu_1 \in\left[0,\frac{(2\alpha-\lvert\beta\rvert)(1-d)-\lvert\beta\rvert}{L(1-d)(1+\alpha^2)}\right)$. Analogously, as
$$g(\mu_1)=\frac{1-d}{M}-\frac{|\beta|(1-d)^2}{ML(1-d)(1+\alpha^2)}\left(\frac{1}{\frac{2\alpha(1-d)-|\beta|}{L(1-d)(1+\alpha^2)}-\mu_1}\right),$$ so
$$
	g^{\prime}(\mu_1)=-\frac{|\beta|(1-d)^2}{ML(1-d)(1+\alpha^2)}
	\left[
	\frac{1}{
		\left(
		\frac{2\alpha(1-d)-|\beta|}{L(1-d)(1+\alpha^2)}-\mu_1
		\right)^2
	}
	\right]<0,
$$
since $$\mu_1<\frac{(2\alpha-\lvert\beta\rvert)(1-d)-\lvert\beta\rvert}{L(1-d)(1+\alpha^2)}<\frac{2\alpha(1-d)-\lvert\beta\rvert}{L(1-d)(1+\alpha^2)},$$ showing the claim \ref{RR1}.

\begin{claim}\label{RR2}
There exists only one point satisfying~\eqref{eq:muOPT} such that $f(\mu_1)=g(\mu_1)$.
\end{claim}

Indeed, to show the existence of this point, it is sufficient to note that $f(0)=0,$
\begin{equation*}
\begin{split}
		&f\left(\frac{(2\alpha-\lvert\beta\rvert)(1-d)-\lvert\beta\rvert}{L(1-d)(1+\alpha^2)}\right)\\
		&=\frac{\left(3 \pi^2-L^2\right)}{2 L^3}\left(1-\frac{(1-d)(1+\alpha^2)}{(1-d)(1+\alpha^2)+(2\alpha-|\beta|)(1-d)-|\beta|}\right)>0
\end{split}
	\end{equation*}
	and
\begin{equation*}
g(0)=\frac{1-d}{M}\left(1-\frac{\lvert\beta\rvert(1-d)}{2\alpha(1-d)-|\beta|}\right)>0, \quad g\left(\frac{(2\alpha-\lvert\beta\rvert)(1-d)-\lvert\beta\rvert}{L(1-d)(1+\alpha^2)}\right)=0.
\end{equation*}
The uniqueness follows from the fact that $f$ is increasing while $g$ is decreasing in this interval, and claim \ref{RR2} holds.

\vspace{0.2cm}

Finally, taking into account the claims \ref{RR1} and \ref{RR2}, the maximum value of the function $\lambda$ must be reached at the point $\mu_1$ satisfying~\eqref{eq:muOPT}, where $f(\mu_1)=g(\mu_1)$, and the Proposition \ref{llllll} is achieved.
\end{proof}

We can illustrate, in Figure \ref{fig1} below, the situation of the previous proposition taking, for instance, $L=5$, $d=\frac{1}{2}$, $\alpha=1$, $\beta=\frac{1}{2}$ and $M=3$, when $\lambda(\mu_1)=\min\{f(\mu_1),g(\mu_1)\}$:

\begin{figure}[H]
	\centering
	\includegraphics[scale=0.45]{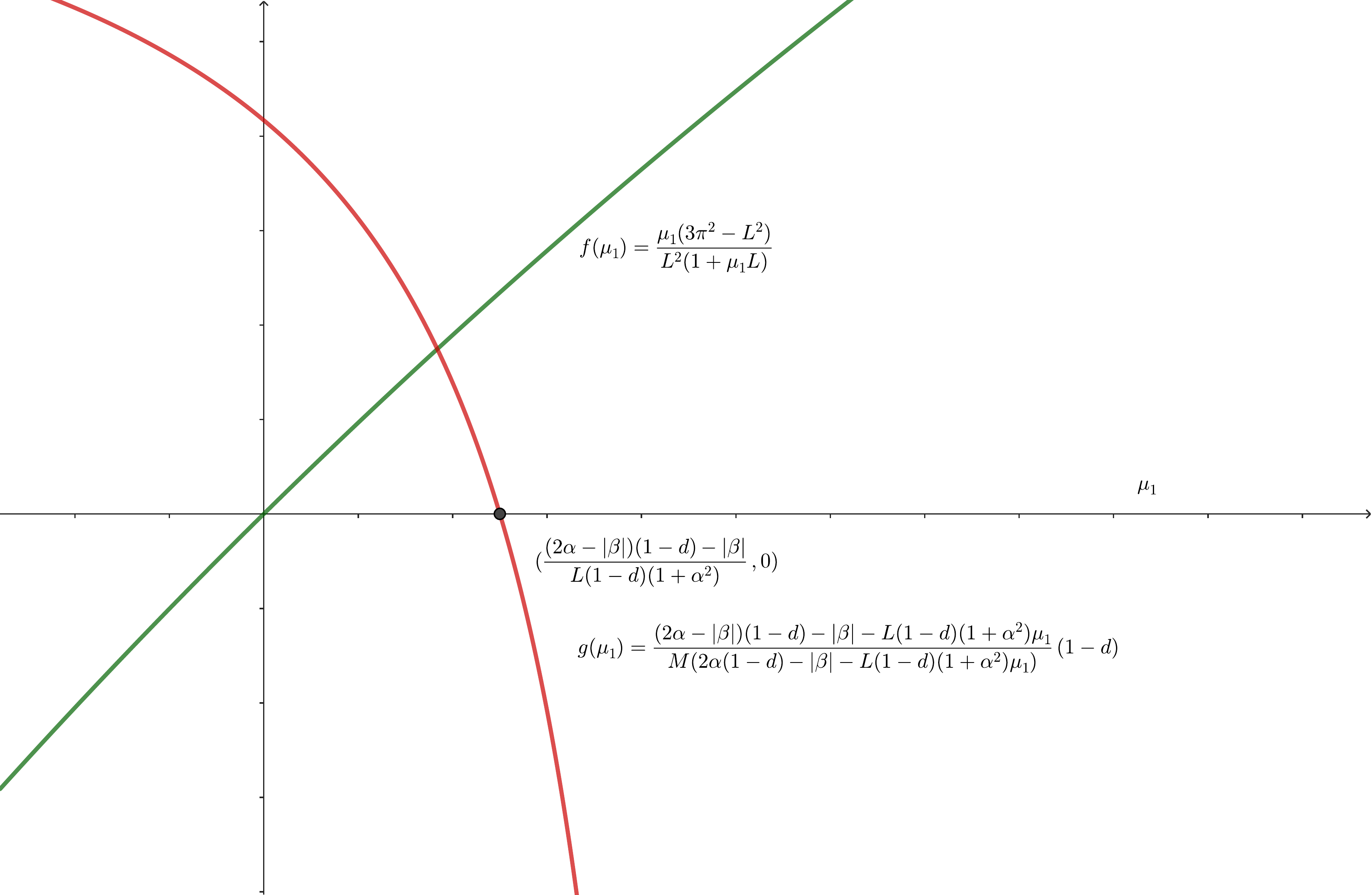}
	\caption{Ilustration of Proposition \ref{llllll}}
	\label{fig1}
\end{figure}
\color{black}

\section{Concluding discussion}\label{sec4}
This article was concerned with the local well-posedness for the system \eqref{eq:KdV-KdV} and stabilization of the energy associated with the linearized KdV-KdV system posed on a bounded domain.  We proved the local well-posedness result by considering a linear combination of the damping mechanism and a time-varying delay term. Moreover, since we have the global solution associated with the linearized system, so, the energy method is used to show the exponential stabilization outcome for the linearized system.

\subsection{Further comments} The following remarks are worth mentioning.
\begin{enumerate}
\item The well-posedness finding is not proved directly. The main issue is due to the time-varying delay term that makes the associated operator for the system time-dependent. Therefore, we invoked the ideas introduced by Kato~\cite{Kato1970} to solve an abstract Cauchy problem of the ``hyperbolic'' type.

\item In \cite{Capistrano2019},  the authors showed the stabilization result when $\beta=0$. In this case, using the classical  compactness-uniqueness argument, they found a restrictive condition on the spatial length, that is, the stabilization follows if only if
$$L\notin\mathcal{N}:=\left\{  \frac{2\pi}{\sqrt{3}}\sqrt{k^{2}+kl+l^{2}}\,:k,\,l\,\in\mathbb{N}^{\ast}\right\}.$$
Additionally, in \cite{Capistrano2019}, the decay rate could not be characterized. In turn, due to the presence of the time-varying delay term in our problem, the restriction on the spatial length is  $L\in(0,\sqrt3\pi)$, which seems reasonable. Last but not least, the decay rate of the energy is explicitly provided contrary to \cite{Capistrano2019}. However, the drawback of our result is that it is only true for the linearized system.

\item It is noteworthy that the strategy used in \cite{Pazoto2008}, and more recently in \cite{Capistrano2019} ensures the global solution of the nonlinear system \eqref{eq:KdV-KdV} \textbf{without delay}. However, such a strategy can not be applied when a time-dependent delay occurs. This is due to the fact that in this case, the system is non-autonomous.  In addition to that, this strategy fails to provide the desired result (global existence of solutions) for the nonlinear system even if a constant delay $\tau(t)=h$ is considered. The reason is our operator $A$, defined by \eqref{eq:A}, has a transport part with nonhomogeneous boundary conditions given by the equation \eqref{eq:tr} and hence we can not expect to control the solution of the transport part in the space $H^{1/3}(0,1)$ in terms of the $L^2(0,1)$ norm of the initial data. Thus, for the full system \eqref{eq:KdV-KdV} with a constant delay $\tau(t)=h$, another approach needs to be applied. We discuss it in the last subsection of the work.
\color{black}

\item Naturally, it would be interesting to make a comparison between the KdV-KdV and the KdV models. Two important facts appear:
\begin{itemize}
\item The Lyapunov approach provides a direct way to deal with the nonlinear system KdV equation, as shown in \cite{Parada2022}. In this work, stability results for the KdV equation with time-varying delay are established using the same techniques. In comparison to our work, two KdV equations are coupled by the nonlinearities; thus the complexity of the problem suggests choosing a different Lyapunov functional and deals only with the linearized system.

\item Another interesting comparison is about the energy decay rate associated with the KdV and  KdV-KdV models, at least for the linear problem. In both cases, the explicit decay rate is shown.
\end{itemize}
\item A calculation shows that taking $\mu_{1}$ and $\mu_{2}$ in Theorem \ref{th:Lyapunov0} such that
\begin{equation*}
	\mu_{1} < \min\left\{\frac{2\alpha-|\beta|}{L(1+\alpha^2)},\frac{(2\alpha-\lvert\beta\rvert)(1-d)-\lvert\beta\rvert}{L(1-d)(1+\alpha^2)}\right\}=\frac{(2\alpha-\lvert\beta\rvert)(1-d)-\lvert\beta\rvert}{L(1-d)(1+\alpha^2)}
\end{equation*}
and
\begin{equation*}
	\begin{aligned}
	\mu_{2} =& \min\left\{\frac{(2\alpha - \lvert\beta\rvert) -L(1+\alpha^2)\mu_1}{\lvert \beta \rvert },\frac{ (2\alpha-\beta)(1-d)-\lvert\beta\rvert-L(1-d)(1+\alpha^2)\mu_1}{\lvert \beta \rvert (1-d)}  \right\}\\
	=&\frac{ (2\alpha-\beta)(1-d)-\lvert\beta\rvert-L(1-d)(1+\alpha^2)\mu_1}{\lvert \beta \rvert (1-d)},
	\end{aligned}
\end{equation*}
implies that $\Psi_{\mu_1,\mu_2}$, given by \eqref{psi}, is negative definite provide that $|\alpha|<1$.

\vspace{0.2cm}

In fact, recall
\begin{equation*}
	\begin{aligned}
		\Psi_{\mu_1,\mu_2}
		= \ & \begin{pmatrix} -2\alpha + \lvert\beta\rvert+ \mu_1L(1+\alpha^2)+\mu_2\lvert\beta\rvert & \beta(1-L\mu_1 \alpha) \\  \beta(1-L\mu_1 \alpha)& \lvert\beta\rvert(d-1)+L\mu_1 \beta^2 \end{pmatrix}	=  \begin{pmatrix} a_{11} & a_{12}\\ a_{21} & a_{22}
		\end{pmatrix}.	
	\end{aligned}
\end{equation*}
In order to $\Psi_{\mu_1,\mu_2}$ be negative definite, the term $a_{11}$ must be negative,
$$
	-2\alpha + \lvert\beta\rvert + L\mu_1(1+\alpha^2) + \lvert\beta\rvert\mu_2 < 0 \Longleftrightarrow
	\mu_2 < \frac{(2\alpha - \lvert\beta\rvert) -L(1+\alpha^2)\mu_1}{\lvert \beta \rvert }
$$
with
\begin{equation*}
2\alpha - \lvert\beta\rvert -L(1+\alpha^2)\mu_1>0,
\end{equation*}
which implies that $\mu_1$ must satisfy
\begin{equation*}
\mu_1<\frac{2\alpha-|\beta|}{L(1+\alpha^2)}.
\end{equation*}
Moreover, we need  that
\begin{equation*}
	\det \Psi_{\mu_1,\mu_2}=\begin{vmatrix}
		-2\alpha+\lvert\beta\rvert + L\mu_1(\alpha^2+1) + \lvert\beta\rvert\mu_2 &  \beta(1-L\mu_1 \alpha) \\
		 \beta(1-L\mu_1 \alpha) &  \lvert\beta\rvert(d-1) + L\mu_1\beta^2
	\end{vmatrix} > 0.
\end{equation*}
Note that,
\begin{equation*}
	\begin{aligned}
		\det \Psi_{\mu_1,\mu_2}=&
 \lvert \beta \rvert \left[ (L \mu_1)^2\lvert \beta \rvert +L \mu_1  (1+\mu_2)\lvert \beta \rvert^{2}-L\mu_1 (\alpha^{2}+1)(1-d)\right.\\
&		\left. -\left((-2\alpha+\lvert \beta \rvert)(1-d)+\lvert \beta \rvert \mu_2 (1-d)+\lvert\beta\rvert\right) \right].
		\end{aligned}
\end{equation*}
Since $$(L\mu_1)^2\lvert\beta\rvert + L\mu_1\lvert\beta\rvert^2 (1+\mu_2)>0,$$
in order to the determinant of $\Psi_{\mu_1,\mu_2}$ be positive, we only need
\begin{equation*}
		-L\mu_1(1-d)(1+\alpha^2)-\left((-2\alpha+\lvert\beta\rvert)(1-d)+\lvert \beta \rvert \mu_2 (1-d)+\lvert\beta\rvert\right) = 0
	\end{equation*}
that is,
\begin{equation*}
	-L\mu_1(1-d)(1+\alpha^2)+(2\alpha-\lvert\beta\rvert)(1-d)-\lvert \beta \rvert \mu_2 (1-d)-\lvert\beta\rvert=0.
\end{equation*}
Thus, we have
\begin{equation*}
	\mu_2=\frac{ (2\alpha-\lvert\beta\rvert)(1-d)-\lvert\beta\rvert-L(1-d)(1+\alpha^2)\mu_1}{\lvert \beta \rvert (1-d)}
\end{equation*}
with
$$
	\mu_1 < \frac{(2\alpha-\lvert\beta\rvert)(1-d)-\lvert\beta\rvert}{L(1-d)(1+\alpha^2)}.
$$
\item From Theorem \ref{th:Lyapunov0} and item (4), it follows that when $L< \sqrt{3} \pi$ and by taking $\mu_1, \mu_2 >0$ so that $\mu_1 L <1$ and
\begin{equation*}
	\mu_1 < \frac{(2\alpha-\lvert\beta\rvert)(1-d)-\lvert\beta\rvert}{L(1-d)(1+\alpha^2)}, \quad
	\mu_2 = \frac{ (2\alpha-\lvert\beta\rvert)(1-d)-\lvert\beta\rvert-L(1-d)(1+\alpha^2)\mu_1}{\lvert \beta \rvert (1-d)},
\end{equation*}
we reach that	$E(t) \leq \zeta E(0)e^{-\lambda t},$ for all $t \geq 0$ where
$$
\lambda \leq \min \left\lbrace
\frac{\mu_1(3\pi^2-L^2)}{L^2(1+\mu_1)}  , \frac{\mu_2(1-d)}{M(1+\mu_2)}
\right\rbrace \quad \text{and} \quad \zeta = \frac{1 + \max\{\mu_1 L,\mu_2\}}{1- \max\{\mu_1 L,\mu_2\}}.
$$
\end{enumerate}

\subsection{Open problems} There are some points to be raised.

\subsubsection{A time-varying delay feedback} The main difficulty when dealing with the problem \eqref{eq:KdV-KdV} is how to prove the global well-posedness. This is due to the lack of the $L^2$ a priori estimate. It is worth mentioning that, in this case, the semigroup theory or multipliers method cannot be applied, due to a restriction of ''controlling'' the solutions of the transport equation in specific norms. We believe that a variation of the approach introduced by Bona \textit{et al.} in \cite{BSS} can be adapted. However, this remains a promising research avenue, and the stabilization problem for the nonlinear system \eqref{eq:KdV-KdV} needs to be investigated.

\subsubsection{Variation of feedback-law} Considering two internal damping mechanisms and a linear combination of boundary damping and time-varying delay feedback, similar result of our work can be proved. Due to the restriction of the well-posedness problem, we cannot remove the boundary damping. However, an open problem is to remove one internal damping mechanism and make $\beta=0$. We believe that the Carleman estimate shown in \cite{BaGuPa} can be used to investigate all these cases.

\subsubsection{Optimal decay rate} Note that the Proposition \ref{llllll} gives the optimality of $\lambda$ for the stabilization problem related to the linear system associated with \eqref{eq:KdV-KdV}. In turn, it is still an open problem to obtain an optimal decay rate for both the linear and nonlinear problems without additional conditions for the parameters $\alpha$ and $\beta$.

\subsection*{Acknowledgment} The authors are grateful to the anonymous referees for the constructive comments that improved this work.

Capistrano--Filho was supported by CAPES grant numbers 88881.311964/2018-01 and  88881.520205/2020-01,  CNPq grant numbers 307808/2021-1 and  401003/2022-1,  MATHAMSUD grant 21-MATH-03 and Propesqi (UFPE). Mu\~{n}oz was supported by the P.hD. scholarship from FACEPE number IBPG-0909-1.01/20. This work is part of the Ph.D. thesis of Mu\~{n}oz at the Department of Mathematics of the UFPE. This work was done while the first author was visiting Virginia Tech. The author thanks the host institution for their warm hospitality.

\subsection*{Data availability statement} Data sharing does not apply to the current paper as no data were generated or analyzed during this study.

\subsection*{Conflict of Interest} The authors declare that they have no conflict of interest.

\end{document}